\documentclass[11pt]{amsart}

\usepackage{nicematrix}
\usepackage{amsmath,amsthm,amssymb,hyperref,enumerate,xcolor,txfonts}
\usepackage{amsaddr}
\usepackage[all]{xy}
\usepackage[left=2cm,right=2cm,top=3cm,bottom=3cm]{geometry}

\usepackage{scrextend}
\changefontsizes{12pt}


\newtheorem{thm}{Theorem}[section]
\newtheorem{cor}[thm]{Corollary}
\newtheorem{lem}[thm]{Lemma}
\newtheorem{prop}[thm]{Proposition}
\newtheorem{conj}[thm]{Conjecture}
\newtheorem{definition}[thm]{Definition}
\theoremstyle{remark}

\newtheorem{rmk}[thm]{Remark}

\newtheorem{example}[thm]{Example}


\def\F{\mathbb F}

\def\GL{\mathrm{GL}}
\def\SL{\mathrm{SL}}

\def\Bcal{\mathcal{B}}

\def\Qcal{\mathcal{Q}}
\def\Tcal{\mathcal{T}}
\def\P{\mathbf{P}}

\def\Par{\mathrm{P}}
\def\B{\mathrm{B}}


\newcommand{\pmt}[1]{\begin{bmatrix}#1\end{bmatrix}}
\newcommand{\bmt}[1]{\begin{bmatrix}#1\end{bmatrix}}

\begin{document}

	\title[A proof of the Lewis-Reiner-Stanton conjecture]{A proof of the Lewis-Reiner-Stanton conjecture for the Borel subgroup}
  \author[Ha]{Le Minh Ha}
  \address{Faculty of Mathematics-Mechanics-Informatics, University of Science, Vietnam National University, Hanoi}
  \author[Hai]{Nguyen Dang Ho Hai}
  \address{Department of Mathematics, College of Sciences, University of Hue, Vietnam}
 \author[Nghia]{Nguyen Van Nghia}
  \address{Department of Natural Sciences, Hung Vuong University, Phu Tho, Vietnam}
  \email{leminhha@hus.edu.vn,ndhhai@husc.edu.vn,nguyenvannghia@hvu.edu.vn}

	\begin{abstract}
For each parabolic subgroup $\Par$ of the general linear group $\GL_n(\F_q)$, a conjecture due to Lewis, Reiner and Stanton \cite{LewisReinerStanton2017} predicts a formula for the Hilbert series of the space of invariants $\Qcal_m(n)^\Par$ where $\Qcal_m(n)$ is the quotient ring $\F_q[x_1,\ldots,x_n]/(x_1^{q^m},\ldots,x_n^{q^m})$. In this paper, we prove the conjecture for the Borel subgroup $\B$ by constructing a linear basis for $\Qcal_m(n)^\B$. The construction is based on an operator $\delta$ which produces new invariants from old invariants of lower ranks. We also upgrade the conjecture of Lewis, Reiner and Stanton by proposing an explicit basis for the space of invariants for each parabolic subgroup.
	\end{abstract}

	\maketitle
	
	\tableofcontents

	\section{Introduction and statement of the main result}
	It is now evident that considering the symmetric group on $n$ letters as the limit when $q \to 1$ of the general linear groups $\GL_n (\mathbb{F}_q)$ over the field of $q$ elements is not merely an intriguing thought experiment but, in fact, an exceptionally useful generalization. Moreover, the hidden symmetry of many combinatorial objects can be revealed and investigated in connection with representation theory and invariant theory. 
	
	In a 2004 paper, Reiner, Stanton and White \cite{ReinerStantonWhite2004} introduced the so-called \emph{cyclic sieving phenomenon} as a general framework to organize the hidden symmetry of finite sets with group action encoded by their generating functions. There is a growing number of instances of this phenomenon in literature, which occur in many different contexts and often with a deep connection to representation theory. 
	  
In their investigation of the cyclic sieving phenomena in the theory of real reflection groups, and analogues for the $q$-Catalan and $q$-Fuss Catalan numbers in the representation theory of rational Cherednik algebras for Coxeter and complex reflection groups, Lewis, Reiner and Stanton \cite{LewisReinerStanton2017} proposed several amazing conjectures about the Hilbert series of the invariant subspaces of the truncated polynomial algebra under the action of parabolic subgroups. The purpose of this paper is to prove a version of their conjecture, the so-called parabolic conjecture, for arguably one of the most critical cases - the Borel subgroup $\B_n$ of $\GL_n (\mathbb{F}_q)$.     
	
	To state the conjectures and our main result, let us fix a positive integer $n$, a prime number $p$, and consider the polynomial algebra $\F_q[x_1,\ldots,x_n]$ over the finite field $\F_q$ of $q$ elements, where $q$ is a power of $p$. The indeterminates $x_i$ are given degree $1$, making this polynomial algebra into a graded ring. The general linear group $\GL_n:=\GL_n(\F_q)$ acts on $\F_q[x_1,\ldots,x_n]$ by linear substitutions. More precisely, if $\sigma=(\sigma_{ij})\in \GL_n$, then 
	\[
	\sigma f(x_1,\ldots,x_n)=f(\sigma x_1,\ldots,\sigma x_n),
	\] 
	where $\sigma x_j=\sum_{i=1}^n\sigma_{ij}x_i$. The $\GL_n$-invariant subspace of $\mathbb{F}_q [x_1, \ldots, x_n]$ was already known since the seminal work of Dickson \cite{Dickson11} in the early 20th century. It is again a polynomial algebra on $n$ generators:  
	\[
	\mathbb{F}_q [x_1, \ldots, x_n]^{\GL_n} = \mathbb{F}_q [Q_{n,0}, \ldots, Q_{n,n-1}], 
	\]
where the Dickson invariant $Q_{n,s}$, $0 \leq s \leq n-1$, is constructed in terms of determinants (the hat signifies that the corresponding entry is omitted):
\[
Q_{n,s}=\frac{\begin{vmatrix}
				x_1^{q^0} & \cdots &x_n^{q^0} \\
				\vdots & \ddots & \vdots\\
				\widehat{x_1^{q^{s}}} & \cdots & \widehat{x_n^{q^{s}}}\\
    \vdots & \ddots & \vdots\\
					x_1^{q^n}  & \cdots & x_n^{q^n}
		\end{vmatrix}}{
			\begin{vmatrix}
				x_1 & \cdots &x_n \\
				x_1^q & \cdots &x_n^q \\
				\vdots & \ddots & \vdots\\
				x_1^{q^{n-1}} & \cdots & x_n^{q^{n-1}}
		\end{vmatrix}}.
  \]
 This ring of invariants appears in many different contexts and has become part of the standard toolbox for algebraic topology and group cohomology, among others. The rings of invariants for parabolic subgroups of $\GL_n$ were also known, (see Hewitt \cite{Hewett96} and Kuhn-Mitchell \cite{Kuhn1986}), and they are also of interest in many different areas.    
	
Now fix a positive integer $m$ and let $I_{n,m}$ denote the ideal of $(x_1^{q^m},\ldots,x_n^{q^m})$ in $\F_q[x_1,\ldots,x_n]$. We denote by $\Qcal_m(n)$ the quotient ring $\F_q[x_1,\ldots,x_n]/(x_1^{q^m},\ldots,x_n^{q^m})$. Since $I_{n,m}$ is stable under the action of $\GL_n$, there is an induced action of $\GL_n$ on the quotient $\Qcal_m(n)$. The $\GL_n$-module structure of the quotient ring $\Qcal_m (n)$ is used in Kuhn's analysis \cite{Kuhn1987} of the rank of Morava $K$-theory of finite groups. When $q=2$, $\Qcal_m (n)$ can be realized as the mod 2 cohomology of a manifold which is the $n$-fold product of the truncated real projective space $\mathbb{R}P^{2^m-1}$. The action of $\GL_n$ and of the Steenrod algebra on this manifold has been under intense investigation in algebraic topology (see Boardman \cite{Boardman93}, Meyer and Smith \cite{Meyer-Smith-2005} or the 2-volume books by Walker and Wood \cite{Walker-Wood-V1, Walker-Wood-V2} and the references therein). Despite some serious attempts, the structure of the invariant ring of $\Qcal_m (n)$ remains largely mysterious until the work of Lewis, Reiner and Stanton \cite{LewisReinerStanton2017} from a completely new perspective. 
	
	We need to introduce some definitions and notations. Recall a weak composition $\beta=(\beta_1,\ldots,\beta_{\ell})$ is just a sequence of non-negative integers. Denote by $B_i$ the partial sum $B_i = \sum_{j=1}^{i} \beta_j$ and write $|\beta|$ for the total $\sum_{1}^{\ell} \beta_i$. Define a partial order on the set of weak compositions by declaring that $\beta \leq \beta'$ iff $\beta_i \leq \beta_i'$ for all $i$. A useful way to organize information about a graded vector space $M$ over a field $k$ is through its Hilbert (or Hilbert-Poincar\'e) series $H(M;t) = \sum_{i=0}^{\infty} (\dim_k M_i) t^i$. Very often, it is possible to read off other helpful information about $M$ from its Hilbert series, such as the degree of its generators and their distribution patterns. For graded objects over the finite field $\mathbb{F}_q$, it is natural to use the following generalization of the usual binomial and $q$-binomial coefficients: If $\alpha = (\alpha_1, \ldots, \alpha_{\ell})$ is a composition of a positive integer $k$, let $A_i$ denote the partial sum $A_i = \alpha_1 + \ldots + \alpha_i$. Following \cite{Reiner-Stanton2010}, the $(q,t)$-multinomial coefficient  $ \bmt{k\\ \alpha}_{q,t}$ is defined by the formula   
 \[
 \bmt{k\\ \alpha}_{q,t} = \frac{\prod_{j=0}^{k-1} (1-t^{q^k-q^{j}})} {\prod_{i=1}^{\ell} \prod_{j=1}^{\alpha_i} (1-t^{q^{A_i} - q^{A_{i} -j}}) }.
 \]
This definition can be easily extended to weak compositions in an obvious way. Lewis, Reiner, and Stanton proposed the following: 
	\begin{conj}[Parabolic conjecture \cite{LewisReinerStanton2017}] 
		Let $n$ be a positive integer, $\alpha$ be a composition of $n$ and let $\Par_{\alpha}$ denote the corresponding parabolic subgroup of $\GL_n (\mathbb{F}_q)$. The Hilbert series for the graded $\mathbb{F}_q$-vector space of $\Par_{\alpha}$-invariants, $\Qcal_m (n)^{\Par_{\alpha}}$, is the power series $C_{\alpha,m}(t)$ given by 
		\[
		C_{\alpha,m}(t)=\sum_{\beta\le \alpha, |\beta|\le m }t^{e(m,\alpha,\beta)}\bmt{m\\ \beta, m-|\beta| }_{q,t},
		\] 
		where  $e(m,\alpha,\beta) = \sum_{i=1}^{n} (\alpha_i-\beta_i)(q^m-q^{B_i})$ and $B_i=\beta_1+\cdots +\beta_i$.
  \end{conj}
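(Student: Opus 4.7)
The plan is to reduce the parabolic case to the Borel case by exploiting the inclusion $\B \subseteq \Par_\alpha$ together with the factorization $\Par_\alpha = \B \cdot L_\alpha$, where $L_\alpha = \GL_{\alpha_1}(\F_q) \times \cdots \times \GL_{\alpha_\ell}(\F_q)$ is the block-diagonal Levi subgroup of $\Par_\alpha$. This gives the identification
\[
\Qcal_m(n)^{\Par_\alpha} = \bigl(\Qcal_m(n)^{\B}\bigr)^{L_\alpha},
\]
so once the explicit basis of $\Qcal_m(n)^{\B}$ produced in the main body of the paper by the operator $\delta$ is in hand, the remaining task is to identify its $L_\alpha$-fixed subspace. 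The Levi acts on the Borel basis in a way that intermixes basis elements indexed by $0/1$-vectors $\widetilde\beta \le (1^n)$ whose partial sums within each block of $\alpha$ agree with a common block-compressed vector $\beta \le \alpha$.

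The key steps I would carry out, in order, are: (i) recall the Borel basis as pairs $(\widetilde\beta, T)$ with $\widetilde\beta \le (1^n)$, $|\widetilde\beta|\le m$, and $T$ ranging over a $(q,t)$-multinomial's worth of auxiliary data; (ii) construct a parabolic analogue $\delta_\alpha$ of $\delta$ by replacing the single-variable Dickson-type ingredient inside $\delta$ by the full Dickson invariants $Q_{\alpha_i, s}$ of each Levi factor $\GL_{\alpha_i}(\F_q)$, iterated over the blocks; (iii) use $\delta_\alpha$ to produce an explicit family of $\Par_\alpha$-invariants indexed by weak compositions $\beta \le \alpha$ with $|\beta| \le m$, together with auxiliary data enumerating $\bmt{m\\ \beta, m-|\beta|}_{q,t}$; (iv) verify linear independence by a leading-term ($\lt$) analysis, which is tractable because each $Q_{\alpha_i,s}$ has a transparent leading monomial; (v) verify spanning by matching the resulting Hilbert series against $C_{\alpha,m}(t)$ term by term.

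The hardest step is (v), and inside it the combinatorial collapse identity
\[
\sum_{\widetilde\beta \,\mapsto\, \beta} t^{e(m,(1^n),\widetilde\beta)}\, \bmt{m\\ \widetilde\beta, m-|\widetilde\beta|}_{q,t} \;=\; t^{e(m,\alpha,\beta)}\bmt{m\\ \beta, m-|\beta|}_{q,t},
\]
where $\widetilde\beta$ ranges over $L_\alpha$-orbit representatives of refinements of $\beta$; this is a $(q,t)$-analogue of the classical multinomial grouping of $0/1$-sequences by their block sums and should reduce to standard $q$-multinomial identities applied blockwise, but the interplay with the exponent $e(m,\alpha,\beta)$ is delicate and is the main technical hurdle. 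A secondary obstacle appears at step (iv): since the Borel construction of $\delta$ is built column-by-column and ignores block boundaries, the outputs of $\delta_\alpha$ can mix contributions across blocks in non-obvious ways, so showing that the $\delta_\alpha$-family achieves exactly the desired dimension, rather than merely sitting inside the invariants, requires careful bookkeeping of how the Levi action is absorbed by passing from rank-one to block-level Dickson invariants. If both these hurdles can be cleared, the explicit basis proposed in the paper's upgrade of the Lewis--Reiner--Stanton conjecture falls out as a byproduct.
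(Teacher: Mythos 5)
The statement you were asked about is the full parabolic conjecture of Lewis--Reiner--Stanton, and you should be aware that the paper does \emph{not} prove it: the paper proves only the Borel case $\alpha = 1^n$ (Theorem \ref{main} and its corollaries), and in its final section it merely \emph{proposes} a conjectural basis for $\Qcal_m(n)^{\Par_{n_1,\ldots,n_k}}$, verified elsewhere only for $n\le 3$. So there is no complete argument in the paper to match your proposal against, and your proposal does not close the gap either.

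Your starting identification $\Qcal_m(n)^{\Par_\alpha} = \bigl(\Qcal_m(n)^{\B}\bigr)^{L_\alpha}$ is correct as group theory, since $\Par_\alpha$ is generated by $\B$ and the Levi $L_\alpha$. But everything after that is a plan rather than a proof, and the two steps you yourself flag as hardest are precisely the open parts. The Levi does not act by permuting the Borel basis $\Bcal_m(n)$: it acts linearly and mixes basis elements with nontrivial coefficients, so the $L_\alpha$-fixed subspace is not spanned by a subset of the Borel basis, and its dimension cannot be read off by grouping basis elements into orbits. Consequently the ``collapse identity'' in your step (v),
\[
\sum_{\widetilde\beta \,\mapsto\, \beta} t^{e(m,(1^n),\widetilde\beta)}\, \bmt{m\\ \widetilde\beta, m-|\widetilde\beta|}_{q,t} \;=\; t^{e(m,\alpha,\beta)}\bmt{m\\ \beta, m-|\beta|}_{q,t},
\]
is not something you can assert; it is not even clear what ``$L_\alpha$-orbit representatives of refinements of $\beta$'' means at the level of the basis, and summing over \emph{all} refinements would overcount, since $\dim \Qcal_m(n)^{\B}$ strictly exceeds $\dim \Qcal_m(n)^{\Par_\alpha}$ in general. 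Similarly, your step (ii)--(iv) construction of a parabolic $\delta_\alpha$ built from block Dickson invariants is essentially what the paper's final section conjectures (there using $\delta_{s+1;m}^{n_1-s}(f\cdot\Phi^s g)$ with $f$ drawn from products of Macdonald Schur functions and Dickson invariants), and proving that this family is linearly independent and spans is exactly the unsolved problem. In short: the reduction to the Borel case is sound but does not trivialize the question, and the proposal leaves the essential content unproved.
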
 
 
 For example, when $m \geq n=2$ and $\alpha = 1^2$  so that $\Par_{\alpha}$ is the Borel subgroup of $\GL_2 (\mathbb{F}_q)$, then $\beta \in \{(0,0), (0,1), (1,0), (1,1) \}$, and the conjectured Hilbert series for $\Qcal_2(m)^{\B_2}$ equals:  
 \[
t^{2(q^m-1)} + t^{q^m-1} \frac{1-t^{q^m-1}}{1-t^{q-1}} + t^{q^m-q}  \frac{1-t^{q^m-1}}{1-t^{q-1}} + \frac{(1-t^{q^m-1})(1-t^{q^m-q})}{(1-t^{q-1})(1-t^{q^2-q})}.
 \]
Lewis, Reiner and Stanton provided evidence for their conjectures by verifying the case $n=2$ for all $m$.  
In \cite{Goyal18}, Goyal obtained some information about the parabolic conjecture for small $m$ and $n$. For example, he constructed some explicit family of exotic invariants that do not come from the usual polynomial invariants. A version of the parabolic conjecture was investigated in \cite{Drescher-Shepler-2020} by Drescher and Shepler. Recently, in the work of Deng \cite{Deng23}, invariants and coinvariants of the truncated polynomial ring have found applications in the study of torsion classes in the cohomology of $\SL_2 (\mathbb{Z})$.

The aim of this paper is to describe explicitly a linear basis for the space of invariants of $\Qcal_m(n)$ under the action of the smallest parabolic subgroup - the Borel subgroup $\B_n$ corresponding to the composition $\alpha = 1^n$. As a corollary, one can easily compute its Hilbert series, and verify the parabolic conjecture for the Borel subgroup for all $m,n \geq 1$.

	To state the main results, we need the following:
	
	\begin{definition}Let $a,b,c$ be positive integers such that $1\le a\le c+1$. Define an operator
		$$\delta_{a;b} \colon \F_q(x_1,\ldots,x_{c})\to \F_q(x_1,\ldots,x_{c+1})$$ 
	 from the ring of rational functions in $c$ variables to the ring of rational functions in $c+1$ variables as follows:  
	If $f\in \F_q(x_1,\ldots,x_{c})$, then $\delta_{a;b}(f)$ is defined as the quotient 
		$$\delta_{a;b}(f)=\frac{\begin{vmatrix}
				x_1 & \cdots &x_a \\
				x_1^q & \cdots &x_a^q \\
				\vdots & \ddots & \vdots\\
				x_1^{q^{a-2}} & \cdots & x_a^{q^{a-2}}\\
					x_1^{q^b} f(\widehat{x_1},x_2,\ldots,x_{c+1})& \cdots & x_a^{q^b}f(x_1,\ldots,\widehat{x_a},\ldots,x_{c+1})
		\end{vmatrix}}{
			\begin{vmatrix}
				x_1 & \cdots &x_a \\
				x_1^q & \cdots &x_a^q \\
				\vdots & \ddots & \vdots\\
				x_1^{q^{a-1}} & \cdots & x_a^{q^{a-1}}
		\end{vmatrix}}.$$
\end{definition}

Thus $\delta_{a;b}$ increases degree by $q^b-q^{a-1}$ if $b\ge a-1$. In particular, when $a=1$, the formula is simplified to
$$\delta_{1;b}(f)=x_1^{q^b-1}\cdot f(x_2,\ldots,x_{c+1}).$$	
	
When $f=1$ is the constant function and $b\ge a-1$, $\delta_{a;b} (1)$ is $S_\lambda(x_1,\ldots,x_a)$, the Macdonald's 7th variation of Schur functions (\cite{Macdonald-Schur}) corresponding to the partition $\lambda=(b-a+1)$. In particular, $\delta_{a;a} (1)$ is the usual Dickson invariant $Q_{a,a-1}$ \cite{Dickson11} of degree $q^a-q^{a-1}$. For ease of notation, we write $D_a$ for $\delta_{a;a}(1)$.

	\begin{definition} \label{Y-inv}
	For two sequences $I=(i_1,\ldots,i_k)$ and $J=(j_1,\ldots,j_k)$ of non-negative integers, define the rational function
	$$Y_b(I; J) :=  \delta_{1;b}^{i_1}( D_1^{j_1} \delta_{2;b}^{i_2} (  D_2^{j_2}  (   \cdots    ( \delta_{k;b}^{i_k}(D_k^{j_k})   )  \cdots  )  ) ).
	$$
	We also define the Frobenius-like operator $\Phi$ by 
	$\Phi Y_b(I;J)=Y_{b+1}(0,I;0,J),$ or explicitly,
	$$\Phi Y_b(I;J)= \delta_{2;b+1}^{i_1}( D_2^{j_1} \delta_{3;b+1}^{i_2} (  D_3^{j_2}  (   \cdots    ( \delta_{k+1;b+1}^{i_k}(D_{k+1}^{j_k})   )  \cdots  )  ) ).$$ 
	\end{definition}

$\delta_{a;b} (f)$ is certainly not always a polynomial function, but we will see later that $Y_b(I; J)$ is indeed a polynomial for all $I$ and $J$.

For each integer $a \geq 0$, we denote by $[a]_q$ the $q$-integer 
$$[a]_q=\frac{q^a-1}{q-1}.$$

\begin{definition}\label{B-basis} For $n\ge 1, m\ge 0$, define the set $\Bcal_{m}(n)$ by the following inductive rule:
	\begin{eqnarray*}
	\Bcal_0(n)&=&\{1\}, \;\text{for all} \; n\ge 1; \\
	\Bcal_m(1)&=&\{ D_1^a\mid a\le [m]_q \}, \;\text{for all} \; m\ge 0;\\
		\Bcal_{m}(n)&=&\{\delta_{1;m}(Y) \mid Y\in \Bcal_m(n-1) \} \sqcup
	\{ D_1^a \Phi(Y) \mid  a<[m]_q, Y\in \Bcal_{m-1}(n-1) \}, n\ge 2, m\ge 1.
	\end{eqnarray*}
\end{definition}
More concretely, we will show that 
\begin{prop}\label{B(m,n) description} 
    $\Bcal_{m}(n)$ is the disjoint union $\coprod_{k=1}^{\min(n,m+1)}\Bcal_m^k(n)$, where $\Bcal_m^k(n)$ denotes the set consisting of all elements $Y_m(I; J)$ for which the sequences $I=(i_1,\ldots,i_k)$ and $J=(j_1,\ldots,j_k)$ satisfy the following conditions:
$$\begin{cases}
	i_1+\cdots+i_k=n-k,\\ 
	j_1 < [m]_q, 
	\ldots,
	j_{k-1}< [m-k+2]_q,
	j_k\le [m-k+1]_q.
\end{cases}$$
\end{prop}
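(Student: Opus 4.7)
My plan is to prove Proposition \ref{B(m,n) description} by induction on $n$, simultaneously for all $m \ge 0$. I strengthen the claim slightly: the parametrization $(I, J) \mapsto Y_m(I; J)$ from constrained sequences to polynomials is injective, which immediately yields the disjoint union statement. The base cases $n = 1$ (any $m$) and $m = 0$ (any $n$) are verified directly: for $n = 1$ both sides consist of the powers $D_1^{j_1}$ with $j_1 \le [m]_q$, and for $m = 0$ both sides equal $\{1\}$ (since $\delta_{1;0}(1) = x_1^{q^0 - 1} = 1$).

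For the inductive step with $m \ge 1$ and $n \ge 2$, I rely on two identities that follow directly from the definitions of $Y_m$ and $\Phi$. Applying $\delta_{1;m}$ increases $i_1$ by one,
$$
\delta_{1;m}\bigl(Y_m((i_1, i_2, \ldots, i_k); J)\bigr) = Y_m\bigl((i_1+1, i_2, \ldots, i_k); J\bigr),
$$
and the combined action of $D_1^a$ and $\Phi$ prepends a block to both sequences,
$$
D_1^a\, \Phi\bigl(Y_{m-1}(I'; J')\bigr) = Y_m\bigl((0, I'); (a, J')\bigr).
$$
Plugging the inductive decompositions of $\Bcal_m(n-1)$ and $\Bcal_{m-1}(n-1)$ into the recursive definition of $\Bcal_m(n)$, the first half of the recursion yields exactly the elements of $\coprod_k \Bcal_m^k(n)$ with $i_1 \ge 1$, and the second half yields those with $i_1 = 0$ and $j_1 < [m]_q$. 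A bookkeeping check confirms that the inductive inequalities $j_i' < [(m-1) - i + 1]_q$ translate to the target inequalities $j_{i+1} < [m - (i+1) + 1]_q = [m - i]_q$ under the index shift $k = k' + 1$, and that the combined range of $k$'s is exactly $\{1, \ldots, \min(n, m+1)\}$.

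The main obstacle is the disjoint union / injectivity claim. Injectivity within the first half is clear, since $\delta_{1;m}$ is multiplication by $x_1^{q^m - 1}$ composed with the variable relabeling $x_i \mapsto x_{i+1}$. Injectivity within the second half follows by first recovering $a$ from the $x_1$-adic valuation of $D_1^a \Phi(Y)$, then invoking the inductive injectivity of the $Y_{m-1}$ parametrization to recover $Y$. The most delicate point is disjointness between the two halves, which I would establish through the auxiliary claim that $\Phi(Y)|_{x_1 = 0} \ne 0$ for every $Y \in \Bcal_{m-1}(n-1)$. Granting this claim, every case-1 element $\delta_{1;m}(Y) = x_1^{q^m - 1} Y(x_2, \ldots, x_n)$ has $x_1$-valuation exactly $q^m - 1$, while every case-2 element $D_1^a \Phi(Y) = x_1^{(q-1) a} \Phi(Y)$ has $x_1$-valuation $(q-1) a < q^m - 1$, forcing disjointness. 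The auxiliary claim is itself proven by induction on the depth of the $\delta_{a;m}$ tower defining $\Phi(Y)$, using the pointwise identity $\delta_{2;m}(f)|_{x_1 = 0} = x_2^{q^m - q} f|_{x_1 = 0}$ (immediate from the $2 \times 2$ determinant formula for $\delta_{2;m}$) together with the non-vanishing of $D_a|_{x_1 = 0}$ for $a \ge 2$.
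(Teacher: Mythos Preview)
Your recursive identification of the two halves of $\Bcal_m(n)$ with the $i_1 \ge 1$ and $i_1 = 0$ portions of $\coprod_k \Bcal_m^k(n)$, via the identities $\delta_{1;m}Y_m(I;J) = Y_m((i_1{+}1,\ldots);J)$ and $D_1^a\,\Phi Y_{m-1}(I';J') = Y_m((0,I');(a,J'))$, is correct and is exactly the paper's approach. The paper's own argument (in \S\ref{Hilbert}) is in fact terser than yours and leaves the bookkeeping to the reader; it does not treat disjointness separately because by that point the linear independence of $\Bcal_m(n)$ modulo $I_{n,m}$ (Theorem~\ref{main}) has already been established, forcing distinctness as polynomials.

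Your self-contained injectivity argument, however, has a gap in the auxiliary claim. The identity $\delta_{2;m}(f)(0,x_2,\ldots) = x_2^{q^m-q}\, f(0,x_3,\ldots)$ is correct but only strips off the outermost layer $\delta_{2;m}^{i_1'}$ of $\Phi Y$; after that you are left with the inner factor $\delta_{s+1;m}^{i_s'}(D_{s+1}^{j_s'}\cdots)$ at $x_1=0$ for some $s \ge 2$, and for $\delta_{a;m}$ with $a \ge 3$ there is no unconditional analogue. Expanding the determinant gives
\[
\delta_{a;m}(f)\big|_{x_1=0} \;=\; \sum_{j=2}^a \frac{x_j^{q^m}\, f(0,\ldots,\widehat{x_j},\ldots)}{V(x_2,\ldots,\widehat{x_j},\ldots,x_a,x_j)^q},
\]
which collapses only under the extra hypothesis that $f|_{x_1=0}$ is itself a $q$-th power. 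That hypothesis does hold throughout $\Phi Y$, because $D_a|_{x_1=0} = D_{a-1}(x_2,\ldots,x_a)^q$ (a sharper statement than the mere nonvanishing you invoke) and the property propagates through the tower; carrying it through all layers yields $\Phi Y(0,x_2,\ldots,x_n) = Y(x_2,\ldots,x_n)^q$, which is precisely Lemma~\ref{Frob}. With that in hand your auxiliary claim reduces to $Y \ne 0$, which is then immediate by induction on $n$. So the two ingredients you list are not sufficient as stated; the fix is to upgrade ``$D_a|_{x_1=0} \ne 0$'' to ``$D_a|_{x_1=0} = D_{a-1}^q$'' and extend the $\delta_{2;m}$ identity to general $\delta_{a+1;b+1}$ under the $q$-th-power hypothesis.
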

 
We are now ready to state the main result of this paper:
	
	\begin{thm}\label{main} $\Bcal_{m}(n)$ is a
	basis for the $\mathbb{F}_q$-vector space of $\B_n$-invariants $\Qcal_m(n)^{\B_n}$.
	\end{thm}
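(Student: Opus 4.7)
The strategy is a double induction on $n$ and $m$ following the recursive construction of $\Bcal_m(n)$. The base cases $m=0$ (where $\Qcal_0(n)=\F_q$ and $\Bcal_0(n)=\{1\}$) and $n=1$ (where $\B_1=\F_q^\times$ acts on $\Qcal_m(1)=\F_q[x_1]/(x_1^{q^m})$ with invariants exactly $\{x_1^{a(q-1)}=D_1^a\mid 0\le a\le[m]_q\}$) are immediate. For the inductive step I plan to establish a direct-sum decomposition
\[
\Qcal_m(n)^{\B_n} \;=\; \delta_{1;m}\bigl(\Qcal_m(n-1)^{\B_{n-1}}\bigr) \;\oplus\; \bigoplus_{a=0}^{[m]_q-1} x_1^{a(q-1)}\,\Phi\bigl(\Qcal_{m-1}(n-1)^{\B_{n-1}}\bigr)
\]
that mirrors the two-piece splitting of $\Bcal_m(n)$ in Definition \ref{B-basis}; substituting the inductive bases $\Bcal_m(n-1)$ and $\Bcal_{m-1}(n-1)$ into the two summands then yields $\Bcal_m(n)$.

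The inclusion ``$\supseteq$'' is invariance. For $\delta_{1;m}(Y)=x_1^{q^m-1}Y(x_2,\ldots,x_n)$ with $Y\in\Qcal_m(n-1)^{\B_{n-1}}$: the diagonal torus fixes $x_1^{q^m-1}$ because $(q-1)\mid(q^m-1)$, and for upper-triangular $\sigma\in\B_n$ the substitution $x_j\mapsto\sum_{i\le j}\sigma_{ij}x_i$ contributes the term $\sigma_{1j}x_1$ for $j\ge 2$; once $Y$ is expanded and multiplied by $x_1^{q^m-1}$, any term carrying a positive power of $x_1$ dies modulo $x_1^{q^m}$, leaving $x_1^{q^m-1}Y(\sigma_{22}x_2,\sigma_{23}x_2+\sigma_{33}x_3,\ldots)$, which equals the original by the inductive $\B_{n-1}$-invariance of $Y$. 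For $D_1^a\Phi(Y)$ the technical heart is a separately established lemma that $\Phi$ sends $\B_{n-1}$-invariants of $\Qcal_{m-1}(n-1)$ to $\B_n$-invariants of $\Qcal_m(n)$, derived by unpacking the determinantal form of the composite $\delta_{a+1;b+1}$ operators; multiplication by $D_1^a=x_1^{a(q-1)}$, itself a Borel invariant, then preserves invariance.

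Linear independence in $\Qcal_m(n)$ comes from a ``bottom $x_1$-degree'' filtration: every nonzero monomial of $\delta_{1;m}(Y)$ has $x_1$-degree exactly $q^m-1$ (since $Y$ lives in $\F_q[x_2,\ldots,x_n]$), while $D_1^a\Phi(Y)$ has minimum $x_1$-degree $a(q-1)$, and the values $\{0,q-1,2(q-1),\ldots,q^m-q,q^m-1\}$ are pairwise distinct. Grouping any hypothetical relation by bottom $x_1$-degree therefore separates the basis elements by their $a$-parameter: the coefficient of $x_1^{q^m-1}$ yields a vanishing combination of the $Y\in\Bcal_m(n-1)$ in $\Qcal_m(n-1)^{\B_{n-1}}$ (killed by induction), and for each $a<[m]_q$ the coefficient of $x_1^{a(q-1)}$ yields a vanishing combination of the evaluations $\Phi(Y)|_{x_1=0}$ for $Y\in\Bcal_{m-1}(n-1)$; a direct determinantal calculation (exemplified by $\Phi(D_1)|_{x_1=0}=(x_2^{q-1})^q$) identifies this evaluation with a Frobenius twist of $Y$, so injectivity of Frobenius reduces the relation to one in $\Qcal_{m-1}(n-1)^{\B_{n-1}}$ and the inductive hypothesis finishes the job.

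For spanning, given $f\in\Qcal_m(n)^{\B_n}$ I expand $f=\sum_{i=0}^{q^m-1}x_1^if_i$ with $f_i\in\Qcal_m(n-1)$; invariance under $\mathrm{diag}(\lambda,1,\ldots,1)$ forces $f=\sum_{a=0}^{[m]_q}x_1^{a(q-1)}g_a$, and invariance under the subgroup of $\B_n$ fixing $x_1$ gives $g_a\in\Qcal_m(n-1)^{\B_{n-1}}$. The top term $x_1^{q^m-1}g_{[m]_q}=\delta_{1;m}(g_{[m]_q})$ is handled by induction; after subtracting it, the remaining $\B_n$-invariance constraints coming from the root subgroups $E_{1j}(c):x_j\mapsto x_j+cx_1$ (expanded via hyperderivatives in $c$) reduce to a triangular recursive system linking $g_0,\ldots,g_{[m]_q-1}$, whose solutions are precisely the images of $\Phi$ applied to $\B_{n-1}$-invariants of $\Qcal_{m-1}(n-1)$. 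The main obstacle is this $\Phi$-analysis: both the invariance lemma for $\Phi$ and the identification of its image with the solution space of the mixing equations require unwinding the composite determinantal formula in Definition \ref{Y-inv} and carefully tracking how $E_{1j}(c)$ interacts with the nested $\delta_{a+1;b+1}$ operators. A subsidiary technical preliminary, handled by induction on the lengths of $I$ and $J$ together with cofactor expansion, is to prove that each $Y_m(I;J)$ is a genuine polynomial and to control its $x_1$-degree.
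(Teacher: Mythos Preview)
Your overall framework—double induction, the bottom-$x_1$-degree filtration, and the Frobenius identity $\Phi(Y)|_{x_1=0}=Y(x_2,\ldots,x_n)^q$—matches the paper's, and your linear-independence and invariance sketches are on the right track (the paper packages the latter via an auxiliary notion of ``$(k,m)$-invariance'' preserved by $\delta_{r;m}$, but your determinantal unwinding would land in the same place). The genuine gap is in the spanning argument. You assert that after stripping off the top term $x_1^{q^m-1}g_{[m]_q}$, the $E_{1j}$-constraints cut out \emph{exactly} the span of the $D_1^a\Phi(Y)$'s, but you give no mechanism for why the solution space is this small, and ``unwinding the determinantal formula for $\Phi$'' does not by itself produce an upper bound on solutions. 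The paper's key step (its Lemma~5.2) is both sharper and more local: it shows that the \emph{lowest} nonzero coefficient $g_i$ in $f=\sum_a x_1^{a(q-1)}g_a$ must be a $q$-th power, $g_i=h^q$, with $h$ a $\B_{n-1}$-invariant in $\Qcal_{m-1}(n-1)$—the drop from level $m$ to level $m-1$ being precisely what links this to $\Phi$. The proof is a trick you do not mention: multiply $f$ by the invariant $x_1^{q^m-q-(q-1)i}$ to push the lowest term up to $x_1$-degree $q^m-q$, then read off from $E_{1k}$-invariance (only the $s\le q-1$ terms in the binomial expansion survive mod $x_1^{q^m}$) that every exponent in $g_i$ is divisible by $q$. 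With this in hand one subtracts $\sum_Y c_Y D_1^i\Phi(Y)$, using the inductive basis to express $h$, and iterates with strictly higher bottom $x_1$-degree. Your proposed direct characterization of the full solution set would, if it worked, have to reproduce this $q$-power constraint on the bottom layer, and nothing in your outline indicates how it emerges from the ``triangular recursive system''.

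A smaller point: the displayed ``direct sum'' is not a grading by $x_1$-degree, since $D_1^a\Phi(Y)$ contributes monomials at many $x_1$-degrees above $a(q-1)$; what you actually have is a filtration by lowest $x_1$-degree. You seem aware of this later on, but writing it as $\bigoplus_a x_1^{a(q-1)}\Phi(\cdots)$ suggests a block decomposition that does not exist, and obscures the fact that both your argument and the paper's are really an iterative peeling-off of the bottom layer.
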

Once a basis is established, the Hilbert series can be computed and we have:   	
\begin{cor}
    The Lewis-Reiner-Stanton parabolic conjecture \cite[1.5]{LewisReinerStanton2017} about the Hilbert series of the invariant subspace of $\Qcal_m (n)$ for the Borel subgroup is true for all $m \geq 0$ and $n \geq 1$.   
\end{cor}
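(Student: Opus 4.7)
The plan is induction on $n$. The base $n=1$ is immediate: $\B_1 = \F_q^\times$ scales $x_1$, so $\Qcal_m(1)^{\B_1} = \F_q[D_1]/(D_1^{[m]_q+1})$ has basis $\Bcal_m(1) = \{D_1^a : 0 \le a \le [m]_q\}$. For the inductive step, I would decompose $\Qcal_m(n)^{\B_n} = V_1 \oplus V_2$ along the two pieces of Definition~\ref{B-basis}, with $V_1$ the subspace of invariants divisible by $x_1^{q^m-1}$, and show that each of the two sets there is a basis of the corresponding summand.

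$V_1$ is the easy part. Because $\sigma x_1 = \sigma_{11} x_1$ for $\sigma \in \B_n$ and $x_1^{q^m} = 0$ in $\Qcal_m(n)$, any substitution $\sigma x_j = \sigma_{1j} x_1 + \cdots$ gives a result whose $x_1$-component is killed after multiplication by $x_1^{q^m-1}$. Hence the principal ideal $(x_1^{q^m-1}) \subset \Qcal_m(n)$ is $\B_n$-stable, and its action factors through $\B_{n-1}$ on the copy of $\Qcal_m(n-1)$ in $x_2,\ldots,x_n$; thus $V_1 = x_1^{q^m-1}\, \Qcal_m(n-1)^{\B_{n-1}}$. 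The inductive hypothesis then identifies $\{\delta_{1;m}(Y) : Y \in \Bcal_m(n-1)\}$ as a basis of $V_1$.

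The heart of the proof is $V_2$, which I take to be the span of $\{D_1^a \Phi(Y) : 0 \le a < [m]_q,\, Y \in \Bcal_{m-1}(n-1)\}$. Three things must be checked: (i) this spanning set lies in $\Qcal_m(n)^{\B_n}$; (ii) it is linearly independent in $\Qcal_m(n)$, with $V_2 \cap V_1 = 0$; and (iii) $V_1 + V_2 = \Qcal_m(n)^{\B_n}$. For (i), one unpacks $\Phi$ via the determinantal definition of $\delta_{a;b}$ and inducts on the depth of the nested composition in $Y_m(I;J)$, using that $D_1 = x_1^{q-1}$ is $\B_n$-invariant and that each $\delta_{a;b}$ interacts controllably with upper-triangular substitutions. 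For (ii), Proposition~\ref{B(m,n) description} provides an explicit parametrization of $\Bcal_m(n)$ by sequences $(I,J)$, and distinct parameters should yield distinct leading monomials to which the inductive hypothesis applies.

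Step (iii) is the main obstacle. Torus invariance forces any element of $\Qcal_m(n)^{\B_n}$ of $x_1$-degree less than $q^m-1$ into the form $\sum_{a=0}^{[m]_q-1} D_1^a f_a$ with each $f_a \in \Qcal_m(n-1)$; invariance under the root subgroups of $\B_n$ that avoid the first row further forces $f_a \in \Qcal_m(n-1)^{\B_{n-1}}$. The delicate constraint comes from the root subgroups acting by $x_k \mapsto x_k + c x_1$ for $2 \le k \le n$; these couple the $f_a$ across the torus decomposition through the positive-$x_1$-degree contributions of the substitution, and unravelling the resulting system should force each $f_a$ into the image of $\Phi$ applied to $\Qcal_{m-1}(n-1)^{\B_{n-1}}$. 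The Frobenius-like behaviour of $\Phi$ --- multiplying degrees by $q$ and stepping the truncation parameter from $m$ down to $m-1$ --- is exactly what this matching requires. Once (iii) is in hand, the Corollary drops out by comparing the resulting Hilbert series recursion
\[
H(\Qcal_m(n)^{\B_n}; t) = t^{q^m-1}\, H(\Qcal_m(n-1)^{\B_{n-1}}; t) + \frac{1 - t^{q^m-1}}{1 - t^{q-1}}\, H(\Qcal_{m-1}(n-1)^{\B_{n-1}}; t^q)
\]
with the one satisfied by $C_{1^n, m}(t)$.
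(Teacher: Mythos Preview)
Your overall architecture matches the paper's: induction on $n$, the two-piece decomposition of $\Bcal_m(n)$, and the Hilbert-series recursion you write down at the end is exactly the one the paper establishes for $C_{1^n,m}(t)$. The treatment of $V_1$ is correct and is the content of the paper's Lemma~\ref{top-type}. Linear independence in (ii) is handled in the paper via the Frobenius property $\Phi Y(0,x_2,\ldots,x_n)=Y(x_2,\ldots,x_n)^q$ (Lemma~\ref{Frob}), which gives precisely the leading-term argument you allude to.

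The gap is in (iii), and it is a real one. Your formulation ``each $f_a$ is forced into the image of $\Phi$'' is not what actually holds, and attempting to prove it directly will fail: the basis elements $D_1^a\Phi(Y)$ are not pure in $x_1$-degree (since $\Phi(Y)$ involves $x_1$), so the coefficients $f_a$ in the torus decomposition $F=\sum_a D_1^a f_a$ of a general invariant need not individually be $q$-th powers of anything. What the paper proves (Lemma~\ref{lower-type}) is much more targeted: only the \emph{lowest} $x_1$-degree coefficient $f_i$ is a $q$-th power $g^q$, with $g\in\Qcal_{m-1}(n-1)^{\B_{n-1}}$. One then writes $g=\sum_Y c_Y Y$ by induction, subtracts $\sum_Y c_Y D_1^i\Phi(Y)$ from $F$, and observes via Lemma~\ref{Frob} that the new invariant has strictly higher lowest $x_1$-degree; iterate until you land in $V_1$.

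The proof of Lemma~\ref{lower-type} itself uses a trick you have not identified: multiply $F$ by the $\B_n$-invariant $x_1^{q^m-q-(q-1)i}$ so that the lowest term sits at $x_1$-degree $q^m-q$, then apply the substitution $x_k\mapsto x_k+x_1$ and read off that the binomial coefficients $\binom{j_k}{s}$ must vanish for $1\le s\le q-1$, forcing $q\mid j_k$. Without this step the coupling you describe between the $f_a$'s is too complicated to unravel directly.
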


Recall that the truncated algebra $\Qcal_m (n)$ admits a $\GL_n$-invariant, non-degenerate $\mathbb{F}_q$-bilinear pairing: 
\[
(x_1^{i_1} \ldots x_n^{i_n}, x_1^{j_1} \ldots x_n^{j_n}) = 
\begin{cases}
    x_1^{q^m-1} \ldots x_n^{q^m-1} & \text{if $i_s + j_s = q^m-1$ for all $1 \leq s \leq n$,}\\
    0 & \text{otherwise}.
\end{cases}
\]
The duality resulting from this pairing yields information about the cofixed space $\Qcal_m (n)_{\B_n}$, and as observed in \cite[Corollary 3.5]{LewisReinerStanton2017}, by taking $m \to \infty$, implies 
\begin{cor}
    The Lewis-Reiner-Stanton parabolic conjecture \cite[1.6]{LewisReinerStanton2017} about the Hilbert series of the cofixed space $\F_q[x_1,\ldots,x_n]_{\B_n}$ of the Borel subgroup $\B_n$ is true for all $n \geq 1$. 
\end{cor}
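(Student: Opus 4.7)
The plan is to deduce this from the previous corollary via a duality-plus-limit argument, exactly as anticipated by the reference to \cite[Corollary 3.5]{LewisReinerStanton2017}. Let $N = n(q^m - 1)$ denote the top degree of $\Qcal_m(n)$. The $\GL_n$-invariant non-degenerate pairing described in the excerpt restricts to a perfect pairing $\Qcal_m(n)_d \times \Qcal_m(n)_{N-d} \to \F_q$ for each $d$, exhibiting a $\GL_n$-equivariant isomorphism $\Qcal_m(n)_d \cong (\Qcal_m(n)_{N-d})^*$. Since $(-)^{\B_n}$ and $(-)_{\B_n}$ are linear duals of one another for any module over a field, this implies
\[
\dim_{\F_q}(\Qcal_m(n)_{\B_n})_d \;=\; \dim_{\F_q}(\Qcal_m(n)^{\B_n})_{N-d},
\]
and therefore, invoking the preceding corollary,
\[
H(\Qcal_m(n)_{\B_n}; t) \;=\; t^{n(q^m-1)} H(\Qcal_m(n)^{\B_n}; t^{-1}) \;=\; t^{n(q^m-1)}\, C_{1^n,m}(t^{-1}).
\]

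Next I would pass from the truncated algebra to the full polynomial ring. For any fixed degree $d$, the quotient map $\F_q[x_1,\ldots,x_n] \twoheadrightarrow \Qcal_m(n)$ is a $\GL_n$-equivariant isomorphism on the degree $d$ component as soon as $q^m > d$, because no monomial of total degree $d$ can contain a factor $x_i^{q^m}$. Taking $\B_n$-coinvariants is right exact and preserves graded components, so in that range the induced map $(\F_q[x_1,\ldots,x_n]_{\B_n})_d \to (\Qcal_m(n)_{\B_n})_d$ is an isomorphism. Consequently, coefficient-wise,
\[
H(\F_q[x_1,\ldots,x_n]_{\B_n}; t) \;=\; \lim_{m \to \infty}\, t^{n(q^m-1)}\, C_{1^n,m}(t^{-1}).
\]

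What remains is to verify that this power-series limit equals the closed-form expression predicted by \cite[Conjecture 1.6]{LewisReinerStanton2017}. This is a purely combinatorial manipulation: expanding $C_{1^n,m}(t^{-1})$ via the definition of the $(q,t)$-multinomial and pulling in the prefactor $t^{n(q^m-1)}$, one checks that in each summand indexed by $\beta \leq 1^n$ the factors $(1 - t^{-(q^m - q^j)})$ combine with the prefactor so that, after regrouping, the $q^m$-dependence in the numerators disappears and the remaining denominators stabilize. This identification is exactly the content of \cite[Corollary 3.5]{LewisReinerStanton2017}, which establishes that Conjecture 1.6 is the formal limit of Conjecture 1.5 under this duality.

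The main (and essentially only) obstacle is the input we have just secured: once Conjecture 1.5 is known for $\B_n$, as in the preceding corollary, the pairing and limit steps outlined above are routine, and the closed-form identification of the limit is the already-existing observation of Lewis, Reiner and Stanton. Thus the proof reduces to plugging the newly proved Borel case of Conjecture 1.5 into their pre-existing framework.
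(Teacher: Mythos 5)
Your argument is correct and is exactly the route the paper takes: the paper also deduces this corollary from the Borel case of the parabolic conjecture via the $\GL_n$-invariant pairing (identifying coinvariants with the dual of invariants, with the degree reversal) and then letting $m\to\infty$, citing \cite[Corollary 3.5]{LewisReinerStanton2017} for the identification of the limit with the predicted closed form. Your write-up simply makes explicit the degree-shift and stabilization steps that the paper leaves implicit.
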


	The following examples make the theorem explicit in some small ranks:

	\begin{example}
		For $m\ge 0$,  $\Qcal_m(1)^{\B_1}$ can be identified with the family 
		\begin{enumerate}
			\item $D_1^{a}$, \quad $a\le [m]_q$.
		\end{enumerate}
	\end{example}
	
	\begin{example}
		For $m\ge 1$, the following families form a basis for $\Qcal_m(2)^{\B_2}$:
		\begin{enumerate}
			\item $\delta_{1;m}(D_1^a)$, \quad $a\le [m]_q$;
			\item $D_1^{a}D_2^b$, \quad $a<[m]_q$, $b\le [m-1]_q$.
		\end{enumerate} 
  Compared with the Hilbert series for the case $n=2$ given in the introduction, we see that the first two summands correspond to the first family, and the last two summands correspond to the second one.
	\end{example}
	
	\begin{example} For $m\ge 2$, the following families form a basis for $\Qcal_m(3)^{\B_3}$:
		\begin{enumerate}
			\item $\delta_{1;m}\delta_{1;m}(D_1^a)$, \quad $a\le [m]_q$,
			\item $\delta_{1;m}(D_1^{a}D_2^b)$, \quad $a<[m]_q$, $b\le [m-1]_q$,
			\item $D_1^{a}\delta_{2;m}(D_2^b)$, \quad $a<[m]_q$, $b\le [m-1]_q$,
			\item $D_1^{a}D_2^{b}D_3^{c}$, \quad $a<[m]_q$, $b<  [m-1]_q$, $c\le [m-2]_q$.
		\end{enumerate}
When $m=1$, only the first three families are involved. 
	\end{example}
	
 The plan of the paper is as follows. In \S \ref{delta-op}, we introduce the operator $\delta_s$ which is essentially a construction of new invariant from old. We give formulas for $\delta_{a;b}$ and its iterates. These will be used in \S \ref{Y-Polynomiality} to prove the polynomiality of the rational functions $Y$ given in the main theorem. In \S \ref{Y-invariance}, we prove the invariance of $Y$'s under the action of the Borel subgroup. The main theorem is proved in \S \ref{main-proof} and we verify the Lewis-Reiner-Stanton conjecture for the Borel group in \S \ref{Hilbert}. 
 A bijection between our basis $\Bcal_m(n)$ and the set of $\B_n$-orbits of $\mathbb{F}_{q^m}^n$ is constructed in \S \ref{bijection}. 
 In the last section, we propose a refinement of the parabolic conjecture. More precisely, we construct an explicit set of invariants in terms of the operator $\delta$, the classical Dickson invariants, and Macdonald's 7th variation of Schur functions \cite{Macdonald-Schur}. These parabolic invariants have required Hilbert series, and we conjecture that they form an $\mathbb{F}_q$-basis for the invariant subspace under consideration.  

  \section*{Acknowledgement} Part of this work was done while the second author visited the Vietnam Institute for Advanced Study in Mathematics (VIASM). He wishes to thank the VIASM for its hospitality. The third author received funding from Hung Vuong University's Fundamental Research Program under project grant No. 03/2024. All three authors are very grateful to the anonymous referee for his comments and suggestions, which helped improve the manuscript's quality.

	\section{The operator $\delta$} \label{delta-op}
In this section, we investigate the main properties of the operator $\delta_{a;b}$. We begin with some standard constructions in modular invariant theory.    
For each $1\le k\le n$, following  \cite{Dickson11} and \cite{Mui75}, put $$L_k=L(x_1,\ldots,x_k)=\det(x_j^{q^{i-1}})_{1\le i,j\le k}$$ 
and $$V_k=V(x_1,\ldots,x_k)= L(x_1,\ldots,x_k)/L(x_1,\ldots,x_{k-1}) = \prod_{\lambda_1,\ldots,\lambda_{k-1}\in \F_q}(\lambda_1 x_1+\cdots +\lambda_{k-1}x_{k-1}+x_k).$$
 Recall the fundamental equation $$V(x_1,\ldots,x_k,X)= X^{q^k} + \sum_{i=0}^{k-1}(-1)^{k-i} Q_{k,i}(x_1,\ldots,x_k)X^{q^{i}}.$$
By construction, the polynomials $Q_{k,0},\ldots,Q_{k,k-1}$ are all invariant under the action of the general linear group $\GL_k$. A celebrated theorem due to Dickson \cite{Dickson11} stated that the invariant ring $\F_q[x_1,\ldots,x_k]^{\GL_k}$ is again a polynomial algebra, which is exactly $\F_q[Q_{k,0},\ldots,Q_{k,k-1}]$. Note that $V(x_1,\ldots,x_k,X)$ is linear in $X$.  
	
If $I=\{i_1<\cdots <i_k\}$ is a subset\footnote{We will always consider ordered subsets of indices in this paper.} of $[n]:=\{1,\ldots,n\}$, we write $L(I)$ for $L(x_{i_1},\ldots,x_{i_k})$.
	For two subsets $I,J$ of $[n]$, we put
	 $$V(J,I) =\prod_{i\in I} V(J,i),$$ 
	where $V(J,i)$ stands for $V(x_{j_1},\ldots,x_{j_k},x_i )$ if $J=\{j_1,\ldots,j_k \}.$ By convention, $V(J,\emptyset)=1$. Note that the order of elements in $I$ and $J$ is irrelevant in the definition of $V(J,I)$ and $V(J, I) = 0$ if $I$ and $J$ are not disjoint.

We will write $\delta_{r+1}$ for $\delta_{r+1;b}$ in the remaining part of this section.  
	Expanding along the last row of the determinant in the numerator of $\delta_{r+1}(f(x_1,\ldots,x_k))$, $k\ge r$, yields another description of $\delta_{r+1}$:
	\begin{eqnarray*}
		\delta_{r+1}(f)&=&\sum_{j=1}^{r+1} \frac{(-1)^{r+1+j}x_j^{q^b} f(x_1,\ldots,\widehat{x_j},\ldots,x_{k+1})L(x_1,\ldots,\widehat{x_j},\ldots,x_{r+1})}{L(x_1,\ldots,x_{r+1})}\\
		&=& \sum_{j=1}^{r+1} \frac{f(x_1,\ldots,\widehat{x_j},\ldots,x_{k+1})x_j^{q^b} }{V(x_1,\ldots,\widehat{x_j},\ldots,x_{r+1},x_j )}.
	\end{eqnarray*}
	
The following result provides a formula for the action of an iterated composition of $\delta_{r+1}$:

	\begin{prop} \label{iterate-delta}
		If $f$ is a rational function in $r' \ge r$ variables and $h$ is a positive integer,  then we have the following formula: 
		$$\delta_{r+1}^h(f)(x_1,\ldots,x_{r'+h}) =\sum_{\substack{I\subset [r+h] \\ |I|=h  }} \frac{f({\bar I }) \varphi^b (I) }{V(\bar I,I)},$$
		where $\bar I$ denotes the complement of $I$ in $[r+h]$, $\varphi$ is the Frobenius map, so that if $I = \{i_1 < \ldots <i_h\}$, then $\varphi^b (I) = (x_{i_1} \ldots x_{i_h})^{q^b}$, and 
  \[
  f(\bar I) = f(x_{1},\ldots,\widehat{x_{i_1}}, \ldots, \widehat{x_{i_h}}, \ldots,x_{r+h}, x_{r+1+h},\ldots,x_{r'+h}).
  \]
	\end{prop}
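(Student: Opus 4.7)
The plan is to proceed by induction on $h$. The base case $h=1$ is immediate: the expansion of $\delta_{r+1}(f)$ along the last row, carried out just above the statement of the proposition, is precisely the claimed formula indexed by the singletons $I = \{j\} \subset [r+1]$.

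For the inductive step, I write $\delta_{r+1}^h(f) = \delta_{r+1}(g)$ with $g := \delta_{r+1}^{h-1}(f)$, regarded as a rational function in $r'+h-1$ variables $y_1, \ldots, y_{r'+h-1}$, and expand the outer operator as the sum over $j \in [r+1]$ using the formula displayed above. Substituting the inductive hypothesis for $g$ under the order-preserving bijection $\sigma_j \colon [r+h-1] \to [r+h] \setminus \{j\}$ (with $y_i \mapsto x_{\sigma_j(i)}$), and setting $I := \sigma_j(I') \cup \{j\}$ for each pair $(j, I')$, one verifies four elementary identities: $\sigma_j(\bar{I'}) = \bar I$; the substituted $f(\bar{I'})$ becomes $f(\bar I)$; the substituted $x_j^{q^b} \varphi^b(I')$ becomes $\varphi^b(I)$; and the substituted $V(\bar{I'}, I')$ becomes $\prod_{l \in I \setminus \{j\}} V(\bar I, l)$. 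Since $|[r+h] \setminus [r+1]| = h-1 < h$, every $h$-subset $I \subset [r+h]$ meets $[r+1]$, and swapping the order of summation yields
\[
\delta_{r+1}^h(f) = \sum_{\substack{I \subset [r+h] \\ |I| = h}} \frac{f(\bar I)\, \varphi^b(I)}{V(\bar I, I)} \left(\sum_{j \in I \cap [r+1]} \frac{V(\bar I, j)}{V([r+1] \setminus \{j\}, j)}\right).
\]

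The crux of the proof, and what I expect to be the main obstacle, is the identity
\[
\sum_{j \in I \cap [r+1]} \frac{V(\bar I, j)}{V([r+1] \setminus \{j\}, j)} = 1.
\]
My plan is to derive this from a Lagrange-interpolation formula in the space of $\F_q$-linearized (additive, or $q$-) polynomials in a formal variable $X$. Specifically, I will show
\[
V(\bar I, X) = \sum_{j \in I \cap [r+1]} \frac{V(\bar I, j)}{V([r+1] \setminus \{j\}, j)}\, V([r+1] \setminus \{j\}, X).
\]
The key ingredients are: for any set of indices $S$, the polynomial $V(S, X) = \prod_{v \in \mathrm{span}_{\F_q}(x_s : s \in S)}(X + v)$ is a $q$-polynomial in $X$ of degree $q^{|S|}$; and any $q$-polynomial of degree at most $q^r$ that vanishes on an $(r+1)$-dimensional $\F_q$-subspace must be identically zero (it would have $q^{r+1} > q^r$ roots). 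Evaluating both sides of the displayed equation at $X = x_k$ for $k \in [r+1]$ establishes agreement: for $k \in [r+1] \setminus I$, $x_k$ lies in $\mathrm{span}(\bar I)$, so $V(\bar I, x_k) = 0$ and each summand on the right also vanishes; for $k \in I \cap [r+1]$, only the $j = k$ summand on the right survives and contributes $V(\bar I, k)$. Thus the difference of the two sides is a $q$-polynomial in $X$ of degree at most $q^r$ vanishing at the $\F_q$-independent points $x_1, \ldots, x_{r+1}$, hence on the $(r+1)$-dimensional span they generate, and so is identically zero. Comparing the coefficients of $X^{q^r}$ on both sides — each equal to $1$ — yields the required identity and concludes the induction.
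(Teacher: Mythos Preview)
Your proof is correct, and it follows the same inductive skeleton as the paper: both reduce the inductive step to the identity
\[
\sum_{j \in I \cap [r+1]} \frac{V(\bar I, j)}{V([r+1] \setminus \{j\}, j)} = 1,
\]
equivalently (after clearing the common factor $L_{r+1}$) to the paper's identity $\sum_{j\in I\cap [r+1]} (-1)^{r+1+j}L(x_1,\ldots,\widehat{x_j},\ldots,x_{r+1}) V(\bar I, j) = L_{r+1}$.

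The genuine difference is in how this identity is established. The paper first extends the sum to all $j\in[r+1]$ (harmlessly, since the extra terms vanish), then plugs in the fundamental equation $V(\bar I,j)=x_j^{q^r}+\sum_{i<r}(-1)^{r-i}Q_{r,i}(\bar I)x_j^{q^i}$ and kills the lower terms via Laplace expansion of $L_{r+1}$. Your route is a Lagrange interpolation in the space of $q$-polynomials: you upgrade the scalar identity to an equality of additive polynomials $V(\bar I,X)=\sum_j \frac{V(\bar I,j)}{V([r+1]\setminus\{j\},j)}V([r+1]\setminus\{j\},X)$ of degree $q^r$, verify it at the $\F_q$-independent points $x_1,\ldots,x_{r+1}$, and then read off the leading coefficient. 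Your argument is slightly more conceptual and avoids the explicit determinant bookkeeping; the paper's argument, in exchange, is entirely self-contained and makes the connection to the Dickson invariants $Q_{r,i}$ explicit. Either way the linear algebra underneath is the same.
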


\begin{proof} We proceed by induction on $h$. The case $h=1$ is already discussed above.  For the inductive step, we have 
	\begin{eqnarray*}
	\delta_{r+1}(\delta_{r+1}^h(f)) &=&\sum_{j=1}^{r+1} (-1)^{r+1+j} \frac{L(x_1,\ldots,\widehat{x_j},\ldots,x_{r+1})  x_j^{q^b} }{L_{r+1}} \cdot 
\delta_{r+1}^h(f)(x_1,\ldots,\widehat{x_j},\ldots,x_{r'+h+1})\\
	&=&
	\sum_{j=1}^{r+1} (-1)^{r+1+j} \frac{L(x_1,\ldots,\widehat{x_j},\ldots,x_{r+1})  x_j^{q^b} }{L_{r+1}} 
	\sum_{\substack{I\subset [r+h+1]\setminus \{j\} \\ |I|=h} } \frac{f({\bar I} )  \varphi^b (I) }{V({\bar I},I)},
\end{eqnarray*}
$\bar I$ denoting the complement of $I$ in $[r+h+1]\setminus \{j\}$. 
Observe that each subset $J$ of $[r+h+1]$ of cardinality $h+1$ is of the form $J=I\sqcup \{j\}$ for some $1\le j\le r+1$, $I\subset [r+h+1]\setminus \{j\}$ and $|I|=h$.  We may then rewrite the above identity as follows:
\begin{eqnarray*}
	\delta_{r+1}^{h+1}(f) &=& \sum_{\substack{J\subset [r+h+1]\\ |J|=h+1}} \sum_{j\in J\cap [r+1]} (-1)^{r+1+j} \frac{L(x_1,\ldots,\widehat{x_j},\ldots,x_{r+1})}{L_{r+1}} 
	\frac{f({\bar J} )  \cdot \varphi^b (J)   }{V({\bar J}, J\setminus \{j\} )}\\
	&=&
	\sum_{\substack{J\subset [r+h+1]\\ |J|=h+1}} \sum_{j\in J\cap [r+1]} (-1)^{r+1+j} \frac{L(x_1,\ldots,\widehat{x_j},\ldots,x_{r+1}) V({\bar J}, j)   }{L_{r+1}} 
	\frac{f({\bar J} ) \cdot \varphi^b (J)   }{V({\bar J}, J )},
\end{eqnarray*}
$\bar J$ denoting the complement of $J$ in $[r+h+1]$. 
It suffices now to prove the identity 
$$\sum_{j\in J\cap [r+1]} (-1)^{r+1+j}L(x_1,\ldots,\widehat{x_j},\ldots,x_{r+1}) V({\bar J}, j)    =L_{r+1}.$$
We extend the sum on the left hand side to all $j \in [r+1]$. This does not change the sum because if $j\in [r+1]\setminus J$, then $j\in \bar J$ and $V({\bar J},j)=0$. But now the identity
$$\sum_{j=1}^{r+1} (-1)^{r+1+j}L(x_1,\ldots,\widehat{x_j},\ldots,x_{r+1}) V({\bar J}, j)    =L_{r+1}$$
follows easily from the fundamental equation $$V({\bar J},j)= x_j^{q^r} + \sum_{i=0}^{r-1}(-1)^{r-i} x_j^{q^{i}}Q_{r,i}({\bar J})$$
and the Laplace expansions $$\sum_{j=1}^{r+1} (-1)^{r+1+j}L(x_1,\ldots,\widehat{x_j},\ldots,x_{r+1})  x_j^{q^i}    = \begin{cases}
	L_{r+1} & i=r,\\
	0 & i<r.
\end{cases}$$
The proof is complete.
\end{proof}

We must also understand the behavior of iterated applications of $\delta_{r+1}$ for different $r$. Let $y_1, \ldots, y_h$ be $h$ indeterminates. Using the fundamental equation, expand the product  $\prod_{j=1}^h V(x_1,\ldots,x_s,y_j)$ and write the result in the form 
\[
\prod_{j=1}^h V(x_1,\ldots,x_s,y_j)  =  \sum_{T\in \Tcal(s,h)}\beta_{T}(x_1,\ldots,x_s) \cdot  \alpha_{T}(y_{1},\ldots,y_{h}),
\]
where $T$ runs over the finite set $\Tcal(s,h)$ of all weak compostions $h=t_0 + \ldots + t_{s}$ of $h$ into $s+1$ parts. Thus 
\[
\beta_T (x_1, \ldots x_s) = (-1)^{\sum_{i=1}^s t_i (s-i)} Q_{s,0}^{t_0} \ldots Q_{s,s-1}^{t_{s-1}},
\]
and $\alpha_T (y_1, \ldots, y_h)$ is the monomial symmetric function of $y_1, \ldots, y_h$ corresponding to the exponent set consisting of $q$-powers such that for each $0 \leq i \leq s$, $q^i$ appears exactly $t_i$ times. Thus by definition, $\alpha_T (y_1, \ldots, y_h)$ is multilinear, symmetric and divisible by the product $y_1 \ldots y_h$. 

When $h=0$, the set $\Tcal (s,0)$ has only one element where $t_i = 0$ for all $i$. We extend the definition of $\beta_T$ and $\alpha_T$ to this case by setting $\alpha_T = \beta_T=1$.

\begin{definition}\label{diagonal}
For each rational function $g$ in $r' \ge r$ variables and each $T \in \Tcal (s,h)$, define
\begin{eqnarray*}
	A_{r;T}(g)&=&\sum_{\substack{I\subset [r+h]\\ |I|=h} }\frac{g(\bar I)\alpha_{T}(I)}{V(\bar I,I) },
\end{eqnarray*}
where $\alpha_T(I)=\alpha_T(x_{i_1},\ldots,x_{i_h})$ if $I=\{i_1<\cdots <i_h \}$, $\bar I$ denotes the complement of $I$ in $[r+h]$, and 
\[
g(\bar I)=g(x_{1},\ldots,\widehat{x_{i_1}}, \ldots, \widehat{x_{i_h}}, \ldots,x_{r+h}, x_{r+1+h},\ldots,x_{r'+h}).
\]
 
\end{definition}
 The operator $A_{r;T}$ generalizes $\delta$. Indeed, when $\tau  \in \Tcal (b,h)$ is the composition where $\tau_b = h$ and all others $\tau_i$ equal zero, then by Proposition \ref{iterate-delta}, we get $A_{r;\tau} (g) = \delta_{r+1}^h (g)$. It will be shown later that very often, $A_{r;T} (g)$ is a polynomial function.  
	
 The following will be useful in inductive arguments. 
	
	\begin{prop}\label{composite-delta}
	Let $r,s,k$ be positive integers such that $r\le s+k$. Let $f(x_1,\ldots,x_{s'})$ and $g(x_1,\ldots,x_{r'})$ be rational functions where $s'\ge s$, $r'\ge r$. The following equality holds:  
	$$\delta_{r+1}^h(g\cdot \delta_{s+1}^k(f))= \sum_{T \in \Tcal(s,h)} A_{r;T}(g) \cdot 	\delta_{s+1}^{h+k}(\beta_{T} f).$$
\end{prop}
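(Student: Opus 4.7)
The strategy is to expand both sides using Proposition \ref{iterate-delta}, re-index so that each side becomes a sum indexed by subsets $K\subset [s+h+k]$ of size $h+k$ (the total set of omitted variables) together with a distinguished $h$-subset $I\subset K\cap [r+h]$, and then verify a termwise match. The combinatorial bridge is the defining expansion $\prod_{j=1}^h V(x_1,\ldots,x_s,y_j)=\sum_{T\in \Tcal(s,h)}\beta_T\alpha_T$, which is exactly what turns a sum over $T$ into a single $V$-factor.

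For the LHS, I would first apply Proposition \ref{iterate-delta} to the outer operator $\delta_{r+1}^h$, producing a sum over $h$-subsets $I\subset[r+h]$ with summand $g(\bar I)\,\delta_{s+1}^k(f)(\cdots)\,\varphi^b(I)/V(\bar I,I)$, where $\delta_{s+1}^k(f)$ is evaluated at the variables remaining after removing those indexed by $I$. Applying Proposition \ref{iterate-delta} a second time to $\delta_{s+1}^k(f)$ yields an inner sum over $k$-subsets $J'$ of $[s+k]$ in the shifted variables; translating $J'$ back to original $x$-indices gives a $k$-subset $K'\subset[s+h+k]\setminus I$. Setting $K=I\sqcup K'$ (of size $h+k$), the Frobenius factors combine as $\varphi^b(I)\varphi^b(K')=\varphi^b(K)$, and tracking denominators one arrives at
\[
\text{LHS} \;=\; \sum_{\substack{K\subset[s+h+k]\\ |K|=h+k}} \varphi^b(K)\,f(\bar K)\sum_{\substack{I\subset K\cap [r+h]\\ |I|=h}} \frac{g(\bar I)}{V(\bar I,I)\,V(\bar K,K\setminus I)}.
\]
The hypothesis $r\le s+k$ is precisely what guarantees $[r+h]\subset[s+h+k]$, so this parametrization is well-posed.

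For the RHS, I would write $A_{r;T}(g)$ as its defining sum over $I\subset[r+h]$ of size $h$, and expand $\delta_{s+1}^{h+k}(\beta_T f)$ by Proposition \ref{iterate-delta} as a sum over $K\subset[s+h+k]$ of size $h+k$. Swapping the order so that the sum over $T$ is innermost, the coefficient of $g(\bar I)f(\bar K)\varphi^b(K)/(V(\bar I,I)V(\bar K,K))$ becomes $\sum_T \alpha_T(I)\beta_T(\bar K)$. Substituting $x_i$'s by the $\bar K$-indexed variables (there are $s$ of them) and $y_j$'s by the $I$-indexed variables in the defining expansion collapses this sum to $V(\bar K,I)=\prod_{i\in I} V(\bar K,x_i)$. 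Since $V(\bar K,x_i)$ vanishes exactly when $i\in\bar K$, the factor $V(\bar K,I)$ forces $I\subset K$ (equivalently $I\subset K\cap[r+h]$), and for such pairs $V(\bar K,K)=V(\bar K,I)\cdot V(\bar K,K\setminus I)$. Cancelling $V(\bar K,I)$ gives exactly the expression for the LHS obtained above.

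The main obstacle is the re-indexing in the LHS expansion: one must translate the inner summation index $J'\subset[s+k]$ (living in the variables that survived the outer omission) back into a subset $K'$ of original $x$-indices in $[s+h+k]\setminus I$, so that the two layers of omissions combine into a single subset $K$ that also organizes the RHS. Once this dictionary is set up correctly, the rest is a transparent algebraic identification based on the single combinatorial identity $\sum_T \alpha_T(I)\beta_T(\bar K)=V(\bar K,I)$.
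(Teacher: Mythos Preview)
Your proposal is correct and follows essentially the same route as the paper's proof: expand both layers of $\delta$ via Proposition \ref{iterate-delta}, re-index the double sum by the combined omitted set $K=I\sqcup K'$ (the paper calls it $H$), and use the identity $\sum_{T}\alpha_T(I)\beta_T(\bar K)=V(\bar K,I)$ together with the vanishing $V(\bar K,I)=0$ for $I\not\subset K$ to pass between the constrained sum $I\subset K\cap[r+h]$ and the unconstrained sum $I\subset[r+h]$. The only cosmetic difference is that the paper runs the argument in one direction (LHS to RHS, expanding $V(\bar H,I)$ at the end), whereas you expand both sides separately and match in the middle by collapsing the $T$-sum on the RHS; the substance is identical.
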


	\begin{proof} We consider the case $s'=s$. When $s'>s$, one just shifts the variables of $f$ in an appropriate way.
		We have
		\begin{eqnarray*}
			\delta_{r+1}^h(g\cdot \delta_{s+1}^k(f)) 
			&=&
			\sum_{\substack{I\subset [r+h]\\ |I|=h} }\frac{ g({\bar  I})  \varphi^b (I) }{V(\bar I,I)} \cdot  \delta_{s+1}^k(f)({\bar I},{r+h+1},\ldots,{s+k+h} ),  \\
			&=&
			\sum_{\substack{I\subset [r+h]\\ |I|=h} }\frac{ g({\bar I})  \varphi^b (I) }{V(\bar I,I)} \cdot  \sum_{\substack{J\subset I' \\ |J|=k  }} \frac{f({I'\setminus J })  \varphi^b (J) }{V(I'\setminus J,J)  }, \quad \text{(putting $I':=[s+k+h]\setminus I$)}\\
			&=&
			\sum_{\substack{H\subset [s+k+h]\\ |H|=h+k} }
			\sum_{\substack{I\subset H \cap [r+h] \\ |I|=h} }\frac{g({\bar I}) }{V(\bar I,I) }
			\frac{f({\bar H })  \varphi^b (H) }{V(\bar H,H)  } V(\bar H,I),   \quad (\text{putting $H=I\sqcup J$, need $r+h\le s+k+h$ })\\
			&=&
			\sum_{\substack{H\subset [s+k+h]\\ |H|=h+k} }
			\sum_{\substack{I\subset [r+h]  \\ |I|=h} }\frac{g({\bar I}) }{V(\bar I,I) }
			\frac{f({\bar H })  \varphi^b (H) }{V(\bar H,H)  }V(\bar H,I),\\
			&=&\sum_{T\in \Tcal(s,h)}
			\sum_{\substack{H\subset [s+k+h]\\ |H|=h+k} }
			\sum_{\substack{I\subset [r+h]  \\ |I|=h} }\frac{g({\bar I})  \alpha_T(I)}{V(\bar I,I) }
			\frac{\beta_T(\bar H)f({\bar H })  \varphi^b (H) }{V(\bar H,H)  } \quad (\text{expanding $V(\bar H,I)$}).
		\end{eqnarray*}
In the 4th equality, the condition $I \subset H\cap [r+h]$ is relaxed to $I \subset [r+h]$. This is possible since if $I \not\subset H$, then $\bar H\cap I\not =\emptyset$ and   $V(\bar H,I)=0$. The result now follows by using Definition \ref{diagonal}. 
\end{proof}

	\section{Polynomiality of $Y$} \label{Y-Polynomiality}
In this section, we prove that the family of rational functions $Y_m (I;J)$ are in fact polynomial. 
 	
Given two symmetric rational functions $f(x_1,\ldots,x_r)$ and $g(x_1,\ldots,x_h)$, the \textit{weighted shuffle product} of $f$ and $g$ is a function in $r+h$ variables, defined by the formula 
	$$f\bullet g:= \sum_{\substack{I\sqcup J= [r+h] \\ |I|=r, |J|=h   }} \frac{f(I)g(J) }{V( I, J)}.$$

Note that this shuffle product depends on $r$ and $h$. The weighted shuffle product generalizes further our previous construction. For example, if $f$ is a rational function in $r$ variables, then Proposition \ref{iterate-delta} says that $$\delta_{r+1;b}^h(f)=f\bullet \varphi^b_h,$$
where $\varphi_h$ is the Frobenius map on $h$ variables. Similarly, if $g$ is a rational function in $r$ variables, then the rational function $A_{r;T}(g)$ defined in Definition \ref{diagonal} can be written as
$$A_{r;T}(g)=g\bullet \alpha_T.$$

It follows from Proposition \ref{composite-delta} that $Y$ is a sum of a product of weighted shuffle products. The following provides a criterion for the polynomiality of a 
weighted shuffle product.

	\begin{prop} \label{delta-poly}
		If $f(x_1,\ldots,x_s)$ is a $\GL_s$-invariant polynomial and $g(x_1,\ldots,x_{k-s})$ is a polynomial which is symmetric, multilinear and divisible by $x_1\cdots x_{k-s}$, then $f\bullet g$ 
		is a polynomial in $\F_q[x_1,\ldots,x_k]$.
	\end{prop}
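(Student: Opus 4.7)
The plan is to prove polynomiality by showing that the a priori rational function $f \bullet g$ has no pole along any hyperplane. Since each denominator $V(I,\bar I)$ factors over $\F_q$ as a product of linear forms $x_j + \sum_l \nu_l x_{i_l}$, the only potential irreducible divisors of the common denominator of the summands are such linear forms. The proposition thus reduces to checking that for each hyperplane $\{L = 0\}$ with $L = x_{j_0} + \sum_{l=1}^{r} c_l x_{k_l}$ (with $c_l \in \F_q^\times$, indices $k_l$ distinct, and $j_0 \in [k]$), the residue of $f \bullet g$ along $L$ vanishes.

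First I would identify the summands $\frac{f(I) g(\bar I)}{V(I,\bar I)}$ with a pole at $L = 0$. Matching variables and coefficients shows that $L$ divides $V(I, j)$ iff the support $\{j_0, k_1, \ldots, k_r\}$ of $L$ is contained in $\{j\} \cup I$, with the specific factor of $V(I,j)$ having coefficients matching $L$. Organizing by the choice of auxiliary indices $T \subset [k] \setminus \{j_0, k_1, \ldots, k_r\}$ with $|T| = s - r$, the contributing subsets $I$ of size $s$ are exactly $I_T^{(0)} := \{k_1, \ldots, k_r\} \cup T$ (matched with $j = j_0$) and $I_T^{(l')} := (\{k_1, \ldots, k_r\} \setminus \{k_{l'}\}) \cup \{j_0\} \cup T$ (matched with $j = k_{l'}$), one for each $l' \in \{1, \ldots, r\}$.

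Next I would perform the residue calculation. Writing $V(I_T^{(0)}, j_0) = L \cdot V'$ and $V(I_T^{(l')}, k_{l'}) = (L/c_{l'}) \cdot V''_{l'}$, a direct reindexing of the product over $\F_q^s$ establishes three specialization identities at $L = 0$ (i.e., $x_{j_0} = -\sum c_l x_{k_l}$): both $V'|_{L=0}$ and each $V''_{l'}|_{L=0}$ coincide with the common ``punctured norm'' polynomial $\prod_{\vec\nu \neq 0}(\sum \nu_l x_{(\cdot)})$ in the variables indexed by $I_T^{(0)}$, and $V(I_T^{(l')}, j)|_{L=0} = V(I_T^{(0)}, j)$ for every $j \in \overline{I_T^{(0)}} \setminus \{j_0\}$. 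After clearing the common factor, the vanishing of the $T$-contribution to the residue reduces to the identity
\[
f(I_T^{(0)})\, G^T(z) + \sum_{l'=1}^{r} c_{l'}\, f(I_T^{(l')})\, G^T(y_{l'}) \equiv 0 \pmod{L},
\]
where $y_{l'} = x_{k_{l'}}$, $z = x_{j_0}$, and $G^T(w) := g(w, \text{variables indexed by } \overline{I_T^{(0)}} \setminus \{j_0\})$.

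The final step invokes the hypotheses on $f$ and $g$. The $\GL_s$-invariance of $f$ gives $f(I_T^{(l')})|_{L = 0} = f(I_T^{(0)})$: the linear substitution that sends $x_{k_{l'}}$ to $-\sum_{l=1}^{r} c_l x_{k_l}$ and fixes the other coordinates is an element of $\GL_s(\F_q)$ with determinant $-c_{l'} \neq 0$. The multilinearity of $g$ --- read, as suggested by the structure of the $\alpha_T$'s, as $\F_q$-linearity in each argument --- means $G^T(w)$ is an $\F_q$-linear combination of $w, w^q, w^{q^2}, \ldots$, so $G^T(z) + \sum_{l'} c_{l'} G^T(y_{l'}) = 0$ on $\{L = 0\}$. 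Thus the residue vanishes for each $T$, and summing over $T$ completes the proof. The main obstacle will be rigorously establishing the three specialization identities for the $V$-factors via reindexing of $\F_q^s$, and handling uniformly the various sizes $r \leq s$ of the support of $L$, including the extreme degenerate case $L = x_{j_0}$ where the divisibility of $g$ by the product of its arguments kills the residue outright.
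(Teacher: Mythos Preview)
Your proposal is correct and takes essentially the same approach as the paper's proof: reduce to showing there is no pole along each linear hyperplane, group the contributing summands by the auxiliary index set $T$ (the paper's $K$), and then conclude via the $\GL_s$-invariance of $f$ together with the additivity of $g$ in each slot. The only differences are bookkeeping---the paper first uses symmetry to normalize the hyperplane to $\omega = x_{a+1} + \sum_{i\le a}\lambda_i x_i$ and rewrites the $V$-factors through the Vandermonde-type determinants $L(\cdot)$ and a Laplace expansion, whereas you keep general indices and package the same content as your three specialization identities for the $V$-products.
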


	\begin{proof} 
		For each $I \subset [k]$, $V(I, J)$ factors completely into linear forms of the form 
		\[
		x_j + \sum_{i \in I} \lambda_{i} x_{i}, \quad j \in J, \lambda_i \in \mathbb{F}_q.  
		\]
After taking the common denominator and reducing the quotient, we want to show that any such linear form does not appear in the denominator of $f \bullet g$. By symmetry and without loss of generality, it suffices to consider the linear form  
		\[
		\omega = x_{a+1} + \lambda_1 x_1 + \ldots + \lambda_a x_a, 
		\]
		where $a \leq s$, and $\lambda_i \neq 0$ for all $1 \leq i \leq a$. If $V(I, \bar I)$ contains $\omega$ as a factor (up to a non-zero scalar), then $I$ must have the form $I = [a+1] \slash \{i\} \sqcup K$ for some subset $K \subset [k]$ such that $|K|=s-a$ and $[a+1] \cap K = \emptyset$. 
		
		Thus we only need to consider the summands of $f \bullet g$ corresponding to these subsets: 
		\begin{equation}\label{sum over K}
		\sum_{|K|=s-a, [a+1] \cap K = \emptyset}  \sum_{i=1}^{a+1} \frac{f([a+1] \slash \{i\} \cup K) g(\bar K\slash  [a+1] \cup \{i\})}{V([a+1] \slash \{i\} \cup K, i) V([a+1] \slash \{i\} \cup K, \bar K\slash  [a+1] )}.
		\end{equation}
		Note that 
		\[
		\frac{1}{V([a+1] \slash \{i\} \cup K, i)} = \frac{L([a+1]\slash \{i\} \cup K)}{L([a+1] \slash \{i\} \cup K \cup  i)}.
		\]
		After rearranging the columns in the usual order, noting that every element of $K$ is greater than $a+1$, we have 
		\[
		L([a+1] \slash \{i\} \cup K \cup  i) =
		\begin{cases}
			(-1)^{s-i} L([a+1]\cup K)   & i \leq a,\\
			(-1)^{s-a} L([a+1]\cup K) & i =a+1.
		\end{cases}
		\]
		Hence the summand corresponding to each $K$ in equation \eqref{sum over K} can be rewritten as 
		\begin{multline*} 
		\frac{1}{L([a+1]\cup K)} \Big[ \sum_{i=1}^{a} \frac{(-1)^{s-i} f([a+1] \slash \{i\} \cup K) g(\bar K\slash  [a+1] \cup \{i\}) L([a+1]\slash \{i\} \cup K)}{V([a+1] \slash \{i\} \cup K, \bar K\slash  [a+1] )}+ \\
			\frac{(-1)^{s-a} f([a] \cup K) g(\bar K\slash  [a+1] \cup \{a+1\}) L([a] \cup K)}{V([a]\cup K, \bar K\slash  [a+1])} \Big].
		\end{multline*}
		Observe that $\omega$ appears in the factorization of $L([a+1] \cup K)$ with  multiplicity $1$, and the other factors $V([a+1] \slash \{i\} \cup K, \bar K\slash  [a+1] )$ clearly are not divisible by $\omega$ for $1 \leq i \leq a+1$. Thus it suffices to show that for each $K$ fixed, the sum 
		\begin{multline}\label{sum equal zero}
		\sum_{i=1}^{a} \frac{(-1)^{s-i} f([a+1] \slash \{i\} \cup K) g(\bar K\slash  [a+1] \cup \{i\}) L([a+1]\slash \{i\} \cup K)}{V([a+1] \slash \{i\} \cup K, \bar K\slash  [a+1] )}+ \\
			\frac{(-1)^{s-a} f([a] \cup K) g(\bar K\slash  [a+1] \cup \{a+1\}) L([a] \cup K)}{V([a]\cup K, \bar K\slash  [a+1])} 
		\end{multline}
		vanishes when $\omega = 0$. Indeed, since each $x_i$ appears non-trivially in $\omega$ for all $1 \leq i \leq a$, $x_{a+1}$ can be written as a nontrivial linear combination of $x_1 \ldots x_a$, and since $f$ is a $\GL_s$-invariant, we have  
		\[
		f([a+1]\slash \{i\},K) = f([a] \cup K), \quad \text{for all $1 \leq i \leq a \leq s$.}
		\] 
	 We also have    
		\[
		L([a+1]\slash \{i\} \cup K)=
		\begin{cases}
			(-1)^{a-i} \lambda_i L([a]\cup K)   & i \leq a,\\
			L([a]\cup K) & i =a+1,
		\end{cases}
		\]
		and
		\[
		V([a+1]\slash \{i\} \cup K,\bar K\slash [a+1]) = V([a] \cup K,\bar K\slash [a+1]).
		\]
 Thus the sum in	Equation \eqref{sum equal zero}, up to a sign, equals 
		\begin{equation*}
		\frac{f([a] \cup K) L([a] \cup K)}{V([a] \cup K,\bar K\slash [a+1])} \Big[\sum_{i=1}^{a} \lambda_i g(\bar K\slash [a+1] \cup \{i\}) + g(\bar K\slash [a+1] \cup \{a+1\}) \Big].
		\end{equation*}
Finally, because $g$ is both symmetric and multilinear, we have 
\[
\sum_{i=1}^{a} \lambda_i g(\bar K\slash [a+1] \cup \{i\}) + g(\bar K\slash [a+1] \cup \{a+1\}) = g(\bar K\slash [a+1] \cup \{\omega\}) =0. 
\]
The proof is finished. 
	\end{proof}

\begin{cor} For any pair of $k$-tuples $I$ and $J$ of non-negative integers,  $Y_b(i_1,\ldots,i_k; j_1,\ldots,j_k)$ given in Definition \ref{Y-inv} is a polynomial.
\end{cor}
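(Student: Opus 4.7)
The plan is to establish polynomiality of $Y_b(I;J)$ by induction on $k=|I|=|J|$, iteratively applying Proposition \ref{composite-delta} to decompose $Y_b(I;J)$ into a finite sum of products of weighted shuffle products, and then invoking Proposition \ref{delta-poly} to conclude each factor is polynomial.

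The base case $k=1$ follows immediately from the explicit formula $\delta_{1;b}(f)=x_1^{q^b-1}f(x_2,\ldots)$ noted after Definition \ref{Y-inv}: iterating gives $Y_b(i_1;j_1)=(x_1\cdots x_{i_1})^{q^b-1}x_{i_1+1}^{(q-1)j_1}$, manifestly polynomial. For the inductive step, I would apply Proposition \ref{composite-delta} to peel off the outermost $\delta_{1;b}^{i_1}$ layer, obtaining
\[
Y_b(I;J)=\sum_{T_1\in\Tcal(1,i_1)} A_{0;T_1}(D_1^{j_1})\cdot \delta_{2;b}^{i_1+i_2}\bigl(\beta_{T_1}D_2^{j_2}\delta_{3;b}^{i_3}(\cdots)\bigr),
\]
where the outer factor, by direct inspection of Definition \ref{diagonal}, equals $x_{i_1+1}^{(q-1)j_1}\alpha_{T_1}(x_1,\ldots,x_{i_1})/(x_1\cdots x_{i_1})$ and is polynomial because $\alpha_{T_1}$ is divisible by the product $x_1\cdots x_{i_1}$. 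I would then iterate this procedure on the inner factor, peeling off one layer of nested $\delta$'s at each step.

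The key subtlety, which I expect to be the main obstacle, is that successive iterations produce inner functions of the form $\beta_T D_{r+1}^{j_{r+1}}$ that are $\GL_r$-invariant in their first $r$ variables and $\GL_{r+1}$-invariant in the second factor, but not $\GL_{r+1}$-invariant as a whole, so Proposition \ref{delta-poly} does not apply directly. I would resolve this by invoking the identity $\delta_{s+2;b}^0=\mathrm{id}$ (Proposition \ref{iterate-delta} at $h=0$) to insert a trivial $\delta^0$ layer and apply Proposition \ref{composite-delta} once more with $s$ chosen one rank higher, absorbing the troublesome $\beta_T$ into a new $\beta_{T'}$ that is a product of Dickson invariants $Q_{r+1,i}$ and thus genuinely $\GL_{r+1}$-invariant. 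After this absorption, Proposition \ref{delta-poly} applies to every factor since each becomes a shuffle product $f\bullet g$ with $f$ a $\GL_s$-invariant polynomial and $g$ multilinear, symmetric, and divisible by the product of its variables. Continuing through all $k-1$ layers, $Y_b(I;J)$ becomes a finite sum of products of polynomials, hence polynomial; the remaining challenge is the careful bookkeeping of the accumulated $\beta$-factors to ensure the $\delta^0$ absorption step is consistent with the parameter constraints $r\le s+k$ of Proposition \ref{composite-delta}.
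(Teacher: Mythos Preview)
Your overall strategy—iteratively applying Proposition~\ref{composite-delta} to peel off the nested $\delta$'s and then invoking Proposition~\ref{delta-poly} on each resulting factor—is exactly the paper's approach, and you have correctly located the crux: after one peeling the inner coefficient becomes $\beta_T D_{r+1}^{j_{r+1}}$, which is only $\GL_r$-invariant in its first $r$ variables, so Proposition~\ref{delta-poly} does not apply directly. However, your proposed fix does not work. The parameter $s$ in Proposition~\ref{composite-delta} is dictated by the index of the inner $\delta_{s+1}$ already present in the expression; inserting a trivial $\delta^0$ layer does nothing to change that index. And $\beta_T$ is a product of $Q_{s,i}$'s in $s$ variables—there is no mechanism in Proposition~\ref{composite-delta} that converts such a factor into a product of $Q_{s+1,i}$'s, so the ``absorption'' you describe simply does not occur.

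The paper resolves the subtlety differently: it strengthens the statement, replacing each $D_a^{j_a}$ by an arbitrary polynomial $u_a$ that is merely $\GL_{a-1}$-invariant in its first $a-1$ variables. With this stronger hypothesis, after one peeling the new inner coefficient $\beta_T u_{s+1}$ is again $\GL_s$-invariant in its first $s$ variables (since $\beta_T$ is built from $Q_{s,i}$'s), so the induction applies directly to the inner factor. For the outer factor $A_{s-1;T}(u_s)$ one uses a separation-of-variables trick: write $u_s=\sum_\ell f_\ell(x_1,\ldots,x_{s-1})\,g_\ell(x_s,\ldots)$ with each $f_\ell$ genuinely $\GL_{s-1}$-invariant, whence $A_{s-1;T}(u_s)=\sum_\ell (f_\ell\bullet\alpha_T)\cdot g_\ell(\text{shifted})$ and Proposition~\ref{delta-poly} applies termwise. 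This decomposition is the missing idea that should replace your $\delta^0$-absorption step; with it (or equivalently with the strengthened hypothesis) your iteration goes through cleanly.
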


\begin{proof} Recall that $$Y_b(I; J) =  \delta_{1;b}^{i_1}( D_1^{j_1} \delta_{2;b}^{i_2} (  D_2^{j_2}  (   \cdots    ( \delta_{k;b}^{i_k}(D_k^{j_k})   )  \cdots  )  ) ).
	$$
	For simplicity, write $\delta_r$ for $\delta_{r;b}$. We will prove a more general result that this expression is still a polynomial even after replacing each $D_a^{j_a}$  by a polynomial $u_a$ which is $\GL_{a-1}$-invariant in the first $(a-1)$ variables.  
	 Consider the case where $Y=\delta_k^{i_k}(u_k)$. If $i_k=0$ then we have nothing to prove. Suppose now $i_k>0$. Write $u_k$ in the form $\sum_r f_r(x_1,\ldots,x_{k-1})g_r(x_k,x_{k+1,}\ldots )$ where $f_r(x_1,\ldots,x_{k-1})$ are  $\GL_{k-1}$-invariant. Then  
	$$\delta_k^{i_k}(u_k)=\sum_r \delta_k^{i_k}(f_r)\cdot g_r(x_{k+i_k},x_{k+1+i_k},\ldots ).$$
	The polynomiality of $Y$ then follows from that of $ \delta_k^{i_k}(f_r)$, which is an application of Proposition \ref{delta-poly}.
	
	For the inductive step, using Proposition \ref{composite-delta}, we have 
	\begin{eqnarray*}
		\delta_{s}^{i_s}( u_s\delta_{s+1}^{i_{s+1}} (  u_{s+1}  (   \cdots    ( \delta_{k}^{i_k}(u_k)   )  \cdots  )  ) )&=& \sum_{T\in \Tcal(s,i_s)} A_{s-1;T}(u_s) 
		\delta_{s+1}^{i_s+i_{s+1}}( \beta_{T}u_{s+1}\delta_{s+2}^{i_{s+2}} (  u_{s+2}  (   \cdots    ( \delta_{k}^{i_k}(u_k)   )  \cdots  )  ) ).
	\end{eqnarray*}
 Since each $\beta_{T}u_{s+1}$ is invariant with respect to the first $s$ variables, we see that 
 \[
 \delta_{s+1}^{i_s+i_{s+1}}( \beta_{T}u_{s+1}\delta_{s+2}^{i_{s+2}} (  u_{s+2}  (   \cdots    ( \delta_{k}^{i_k}(u_k)   )  \cdots  )  ) )
 \] 
 is a polynomial by inductive hypothesis. By Proposition \ref{delta-poly}, $A_{s-1;T}(u_s) $ is also a polynomial, and so the result is proved.
\end{proof}
\section{Invariance of $Y$} \label{Y-invariance}
In this section, we prove that $Y_m(I;J)$, considered as an element in $\Qcal_{m} (n)$ is invariant under the action of the Borel subgroup. We need the following definition.
\begin{definition}
	Let $f(x_1,\ldots,x_k)\in \F_q[x_1,\ldots,x_k]$ be a polynomial. We say that $f$ is \textbf{$(k,m)$-invariant} if the following conditions hold:
	\begin{enumerate}
		\item $f(\lambda_1x_1,\ldots,\lambda_kx_k)=f(x_1,\ldots,x_k)$ for all $\lambda_i\in \F_q^\times $. In other words, $f$ is  invariant under the action of the torus subgroup;
		\item $f(x_1,\ldots,x_i,\ldots, x_j+x_i,\ldots, x_k)=f(x_1,\ldots,x_k) +(x_i^{q^m})$ for all 
		$1\le i<j\le k.$
	\end{enumerate}
\end{definition}

\begin{rmk}
It is clear that $f$ satisfies the first condition if and only if it is a sum of monomials in which all exponents are multiple of $(q-1)$. Also, if $f$ is $(k,m)$-invariant then $f$ is $\B_k$-invariant modulo $(x_1^{q^m},\ldots,x_k^{q^m})$.
\end{rmk}

	\begin{prop}\label{k-invariant}
Suppose $r\le k+1$,  $f(x_1,\ldots,x_k)$ is $(k,m)$-invariant and $\delta_{r;m}(f)$ is a polynomial. Then $\delta_{r;m}(f)$ is $(k+1,m)$-invariant.
	\end{prop}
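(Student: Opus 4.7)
To verify that $\delta_{r;m}(f)$ is $(k+1, m)$-invariant, we check both conditions. Throughout, write $\delta_{r;m}(f) = \det M / L_r$ with $M$ the $r \times r$ matrix in the definition; set $P = \delta_{r;m}(f)$ and $P' = P|_{x_j \to x_j + x_i}$.

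For torus invariance: since $\lambda^{q^s} = \lambda$ for every $\lambda \in \F_q^\times$ and every $s \ge 0$, scaling $x_a \to \lambda_a x_a$ multiplies column $a$ of $M$ uniformly by $\lambda_a$ (using the torus invariance of $f$ for the $(r,a)$ entry), and the same applies to $L_r$. The ratio $\det M / L_r$ is therefore preserved. Variables $x_a$ with $a > r$ appear only inside the $f_a$'s and are handled by the torus invariance of $f$ directly.

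For additive invariance, fix $1 \le i < j \le k+1$. The Vandermonde $L_r$ is invariant under $x_j \to x_j + x_i$ (column operation $C_j \to C_j - C_i$), so $P' - P = (\det M' - \det M)/L_r$. One computes $v_{x_i}(L_r) = 1$ when $i \le r$ and $0$ otherwise, with $L_r/x_i^{v_{x_i}(L_r)}$ coprime to $x_i$; thus, by the UFD property of $\F_q[x_1,\ldots,x_{k+1}]$, it suffices to show $v_{x_i}(\det M' - \det M) \ge q^m + v_{x_i}(L_r)$. The difference matrix $D := M' - M$ is supported on column $j$ (rows $s \le r-1$ with entry $x_i^{q^{s-1}}$, present only when $j \le r$) and on row $r$ (entries $\epsilon_a$). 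The $(k,m)$-invariance of $f$ together with $(x_j + x_i)^{q^m} = x_j^{q^m} + x_i^{q^m}$ gives $\epsilon_a \in (x_i^{q^m})$ for all $a$; moreover, $\epsilon_i = x_i^{q^m}(f_i' - f_i)$ actually lies in $(x_i^{q^m+1})$, because $f_i'$ coincides with $f_i$ at $x_i = 0$ (the substitution becomes the identity there) and $f_i$ itself does not involve $x_i$.

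By multilinearity, $\det M' - \det M = \det E + \det N + \sum_{s=1}^{r-1} \det B_s$, where $E$ modifies only row $r$, $N$ modifies only column $j$, and $B_s$ uses both row $s$ of $D$ and row $r$ of $D$; other combinations vanish since $D$'s rows in $[1, r-1]$ are all along $e_j$. Each piece has $x_i$-valuation at least $q^m + v_{x_i}(L_r)$: expanding $\det E$ along row $r$ produces summands $\pm\, \epsilon_a L(x_1, \ldots, \widehat{x_a}, \ldots, x_r)$, with the extra $x_i$ coming either from the Vandermonde (when $a \ne i$ and $i \le r$) or from $\epsilon_i$ itself (when $a = i$); when $j \le r$, a column operation $C_j \leftarrow C_j - C_i$ inside $N$ simplifies it to $\pm\, x_i^{q^m} f_i L(x_1, \ldots, \widehat{x_j}, \ldots, x_r) \in (x_i^{q^m+1})$; and each $\det B_s$ picks up $x_i^{q^{s-1}}$ from row $s$ and $x_i^{q^m}$ from $\epsilon$, lying in $(x_i^{q^{s-1}+q^m}) \subseteq (x_i^{q^m+1})$. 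Summing and dividing by $L_r$ then gives $P' - P \in (x_i^{q^m})$. The main obstacle is the case $i < j \le r$, where $L_r$ carries an $x_i$-factor that must be absorbed: the summands $\det A_s$ from naively expanding the column-$j$ modifications of $N$ individually lack sufficient $x_i$-valuation, and one needs both the cancellation revealed by the column operation $C_j - C_i$ and the divisibility $x_i \mid (f_i' - f_i)$ to produce the requisite extra factor of $x_i$.
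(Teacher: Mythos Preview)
Your proof is correct and follows essentially the same strategy as the paper's: reduce to showing that $\det M' - \det M$ has $x_i$-valuation at least $q^m + v_{x_i}(L_r)$, then use $(k,m)$-invariance of $f$ for the row-$r$ entries, the Vandermonde factor $x_i$ in the minors $L(x_1,\ldots,\widehat{x_a},\ldots,x_r)$ when $a\ne i$, and the observation $x_i\mid (f_i'-f_i)$ for the $a=i$ term. The only organizational difference is that the paper splits $\sigma N$ by multilinearity in \emph{column} $j$ into two determinants, whereas you expand by multilinearity in \emph{rows} into the three pieces $\det E$, $\det N$, and $\sum_s\det B_s$; your $\det E$ is the paper's ``first determinant'', while your $\det N + \sum_s\det B_s$ together recover the paper's ``second determinant'', and the column operation $C_j\leftarrow C_j - C_i$ you perform on $N$ is exactly the simplification the paper uses on its second piece.
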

	
	\begin{proof} 	Let $N$ denote the numerator of $\delta_{r;m} (f)$:  
 $$N  = \begin{vmatrix}
				x_1 & \cdots &x_r \\
				\vdots & \cdots & \vdots\\
				x_1^{q^{r-2}} & \cdots & x_r^{q^{r-2}}\\
				x_1^{q^m} f(\widehat{x_1},x_2,\ldots,x_{k+1})& \cdots & x_r^{q^m}f(x_1,\ldots,\widehat{x_r},\ldots,x_{k+1})
		\end{vmatrix}.$$
		Consider the operator $\sigma$ which sends $x_j$ to $x_j+x_i$ for some $1\le i<j\le k+1$ and leaves fixed all other $x_{\ell}$, $\ell \neq j$. Since $\delta_{r;m}(f)$ is a polynomial, so is $\sigma(\delta_{r;m}(f))$. But the denominator $L_r$ is invariant under the action of $\sigma$, it follows that $\sigma(N)-N$ is divisible by $L_r$.     
  
  We will prove that $\sigma(N)-N$ is also divisible by $x_i^{q^m}$ if $j>r$, and by $x_i^{q^m+1}$ if $j \leq r$. Thus, in both cases, $\sigma(N)-N$ is divisible by $x_i^{q^m} L_r$, and we are done. 
  
  Now assume that $j \leq r$, the $j$th column of $\sigma N$ is a sum of two column vectors:
\[
[x_j, x_j^{q}, \ldots, x_j^{q^{r-2}}, x_j^{q^m} f(x_1, \ldots, \widehat{x_j}, \ldots, x_{k+1})]^T + [x_i, x_i^{q}, \ldots, x_i^{q^{r-2}}, x_i^{q^m} f(x_1, \ldots, \widehat{x_j}, \ldots, x_{k+1})]^T.
\]
Hence, $\sigma N - N$ is the sum of two determinants. The first have the top $(r-1)$ rows exactly the same as that of $N$, while the last row equals 
		\[
		\begin{cases}
			x_\ell^{q^m}[f(x_1,\ldots,\widehat{x_\ell},\ldots,x_j + x_i, \ldots, x_{k+1}) - f(x_1,\ldots,\widehat{x_\ell},\ldots,x_j, \ldots, x_{k+1})]    & \text{in column $\ell \neq j$,}\\
			0   & \text{in column $\ell=j$ if $j \leq r$.}\\
		\end{cases}
		\]
The difference $[f(x_1,\ldots,\widehat{x_\ell},\ldots,x_j + x_i, \ldots, x_{k+1}) - f(x_1,\ldots,\widehat{x_\ell},\ldots,x_j, \ldots, x_{k+1})]$ is divisible by $x_i^{q^m}$ because $f$ is a $(k,m)$-invariant. Hence all entries in the last row of $\sigma N - N$ are divisible by $x_i^{q^m}$.
By inspecting the $i$-th column, we see that this determinant is divisible by $x_i^{q^m+1}$.

For the second determinant, its $i$th and $j$th columns are almost the same, and this allows us to compute this determinant explicitly:  
\[
x_i^{q^m}[f(x_1,\ldots, x_i, \ldots, \widehat{x_j},\ldots,x_{k+1})-f(x_1,\ldots, \widehat{x_i},\ldots,x_j+x_i, \ldots, x_{k+1})] L_{r-1} (x_1, \ldots, \widehat{x_j}, \ldots, x_r). 
\]
Since $L_{r-1} (x_1, \ldots, \widehat{x_j}, \ldots, x_r) $ contains $x_i$, we conclude that this determinant is also divisible by $x_i^{q^{m}+1}$.

		

  In the case where $j >r$, only the first determinant occurs, and the proof is similar. The proof is complete. 
	\end{proof}

		\begin{cor}Each polynomial $Y_m(I;J)$ given in the main theorem is a $\B_n$-invariant modulo $(x_1^{q^m},\ldots,x_n^{q^m})$.
	\end{cor}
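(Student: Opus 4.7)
The plan is to prove by inside-out induction on the nested structure of $Y_m(I;J)$ that each intermediate polynomial appearing in its construction is $(c,m)$-invariant, where $c$ denotes its current number of variables. The final claim then follows at once from the remark after the definition of $(k,m)$-invariance, since $Y_m(I;J)$ is a polynomial in $n = k + i_1 + \cdots + i_k$ variables.

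For the base case, the innermost term $D_k^{j_k}$ is a power of a Dickson invariant and therefore $\GL_k$-invariant; in particular, it is trivially $(k,m)$-invariant (the error terms $(x_i^{q^m})$ are unnecessary). The inductive step has two sub-claims. First, if $g(x_1,\ldots,x_c)$ is $(c,m)$-invariant and $s \le c$, then $D_s^{j_s}\cdot g$ is again $(c,m)$-invariant: torus invariance is immediate, and for condition (2) of the definition, fix $1 \le i < j \le c$. If $j \le s$, the factor $D_s^{j_s}$ is fixed by $x_j \mapsto x_j + x_i$ by $\GL_s$-invariance, while $g$ changes by a multiple of $x_i^{q^m}$; if $j > s$, then $D_s^{j_s}$ is unchanged since it does not involve $x_j$. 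Either way the product changes by a multiple of $x_i^{q^m}$. Second, if $g$ is $(c,m)$-invariant and $s \le c+1$, then Proposition \ref{k-invariant} gives that $\delta_{s;m}(g)$ is $(c+1,m)$-invariant. The required polynomiality of $\delta_{s;m}(g)$ at each stage is guaranteed by the polynomiality result of Section \ref{Y-Polynomiality}.

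Applying these two sub-steps alternately from the innermost $D_k^{j_k}$ outward, the variable count grows from $k$ to $k + i_k$ after the $\delta_{k;m}^{i_k}$ applications, then to $k + i_k + i_{k-1}$ after multiplication by $D_{k-1}^{j_{k-1}}$ and application of $\delta_{k-1;m}^{i_{k-1}}$, and so on, eventually yielding $(n,m)$-invariance for $Y_m(I;J)$. The inequality $s \le c+1$ needed to invoke Proposition \ref{k-invariant} at each step is immediate from tracking variables, since when applying $\delta_{s;m}$ one has $c \ge s-1$ after the preceding outward stage. The main technical work has already been carried out in Proposition \ref{k-invariant}, so there is no real obstacle beyond the above bookkeeping; the only nontrivial point worth double-checking is the multiplication-by-$D_s^{j_s}$ sub-step, which is why I have spelled it out in both case $j \le s$ and $j > s$.
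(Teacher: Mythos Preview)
Your proof is correct and follows essentially the same inside-out induction as the paper's own argument. The paper's proof is terser: it notes that $D_k^{j_k}$ is $(k,m)$-invariant, applies Proposition~\ref{k-invariant} repeatedly to obtain $(k+i_k,m)$-invariance of $\delta_{k;m}^{i_k}(D_k^{j_k})$, and then simply says ``by repeating this argument'' to reach $(n,m)$-invariance of $Y_m(I;J)$, without explicitly isolating the multiplication-by-$D_s^{j_s}$ step that you spell out. Your case analysis for that step (distinguishing $j\le s$ from $j>s$) is a genuine detail the paper leaves implicit, so your write-up is if anything more complete.
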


\begin{proof}Recall that 
$Y_m(I; J) :=  \delta_{1;m}^{i_1}( D_1^{j_1} \delta_{2;m}^{i_2} (  D_2^{j_2}  (   \cdots    ( \delta_{k;m}^{i_k}(D_k^{j_k})   )  \cdots  )  ) )$. Since $D_k^{j_k}$ is $(k,m)$-invariant and $\delta_{k;m}^{i_k}(D_k^{j_k})$ is a polynomial, it follows that 
$\delta_{k;m}^{i_k}(D_k^{j_k})$ is $(k+i_k,m)$-invariant. By repeating this argument, we see that $Y_m(I; J)$ is $(k+i_1+\cdots+i_k,m)$-invariant. But since $k+i_1+\cdots+i_k=n$, we are done. (See Section \ref{Hilbert} for the condition imposed on the sequences $I,J$ for which $Y_m(I,J)$ is an element of the set $\Bcal_m(n)$.)
\end{proof}

We conclude this section with a warning that the operator $\delta_{r;m}$ is not well-behaved with respect to taking modulo $(x_1^{q^m}, \ldots, x_n^{q^m})$. For example when $r=2$, we have $x_1^{q^m+q-2} \equiv 0$ modulo  $(x_1^{q^m}, x_2^{q^m})$ but   
\[
\delta_{2;m} (x_1^{q^m+q-2}) = x_1^{q^m-1} x_2^{q^m-1} \not\equiv 0 \quad \text{in $\Qcal_m (2)$.}
\]
	\section{Proof of Theorem \ref{main}}\label{main-proof}
In this section, we give a proof of the main theorem that the set $\Bcal_m (n)$ of polynomials $Y_m (I, J)$ form a basis for the $\mathbb{F}_q$-vector space $\Qcal_m (n)^{\B_n}$. For inductive arguments, we make some observations about monomials with smallest $x_1$-degree occurring in a $\B_n$-invariant polynomial in $\Qcal_m (n)$.  

Since the case $n=1$ is trivial, we assume in this section that $n \geq 2$. We write $\B(x_2,\ldots,x_n)$ for the image of the inclusion $b\mapsto \pmt{1 & 0 \\ 0 &b}$ of the Borel subgroup $\B_{n-1}$ into $\GL_n$.

Let $F(x_1, \ldots x_n)$ be a $\B_n$-invariant polynomial in $\Qcal_m (n)$. Note that all exponents occurred in $F$ are multiple of $(q-1)$. If $F$ is divisible by $x_1^{q^m-1}$, then we have the following simple observation: 
\begin{lem}\label{top-type} Suppose $m\ge 0$, and $F(x_1,\ldots,x_n)=x_1^{q^m-1}f(x_2,\ldots,x_n)$ is a polynomial which is $\B_n$-invariant in $\Qcal_m(n)$. Then $f(x_2,\ldots,x_n)$ is $\B(x_2,\ldots,x_n)$-invariant modulo $(x_2^{q^{m}},\ldots,x_n^{q^{m}})$. 
\end{lem}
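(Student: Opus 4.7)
The plan is to exploit the inclusion $\B(x_2,\ldots,x_n) \hookrightarrow \B_n$ via the block-diagonal embedding $b \mapsto \mathrm{diag}(1,b)$. Every element of this subgroup fixes the variable $x_1$, and in particular fixes the factor $x_1^{q^m-1}$ that has been split off of $F$. Thus, for any $b \in \B(x_2,\ldots,x_n)$, one immediately obtains the clean identity
$$b \cdot F - F \;=\; x_1^{q^m-1}\bigl(b \cdot f - f\bigr).$$

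The hypothesis that $F$ is $\B_n$-invariant in $\Qcal_m(n)$ tells us that the left-hand side belongs to $I_{n,m}=(x_1^{q^m},x_2^{q^m},\ldots,x_n^{q^m})$. The key point is that $b\cdot f - f$ is a polynomial in $x_2,\ldots,x_n$ alone, since $b$ acts only on these variables and $f$ has no $x_1$-dependence to begin with. Consequently, every monomial occurring in $x_1^{q^m-1}(b\cdot f - f)$ has $x_1$-degree exactly $q^m-1$, which is strictly less than $q^m$. Since $I_{n,m}$ is a monomial ideal, a monomial lies in it only when it is divisible by some $x_i^{q^m}$; the $x_1$-degree being too small rules out divisibility by $x_1^{q^m}$, so divisibility must be witnessed by some $x_i^{q^m}$ with $i \geq 2$. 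Cancelling the common factor $x_1^{q^m-1}$ then yields $b\cdot f - f \in (x_2^{q^m},\ldots,x_n^{q^m})$, which is exactly the required congruence.

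There is essentially no obstacle here: the proof is a short exercise in tracking which variables appear in which factor, together with the trivial observation that the truncation ideal $I_{n,m}$ is monomial so its membership can be tested degree by degree. The only point to mention carefully is that only the sub-Borel $\B(x_2,\ldots,x_n) \subset \B_n$ is actually used; the full strength of $\B_n$-invariance is not needed for the conclusion, a fact worth flagging since it suggests how the lemma will be applied inductively when decomposing invariants according to their lowest $x_1$-degree.
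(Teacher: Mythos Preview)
Your proof is correct and is exactly the straightforward argument the paper has in mind; the paper's own proof consists of the single word ``Clear.'' You have simply spelled out the obvious verification that the block-diagonal sub-Borel fixes $x_1$, so the factor $x_1^{q^m-1}$ can be cancelled against the monomial ideal $I_{n,m}$.
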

\begin{proof}
	Clear.
\end{proof}	
Now, assume that the lowest $x_1$-degree of $F$ is strictly smaller than $q^m-1$ where $m \ge 1$. Write 
$$F(x_1,\ldots,x_n)=x_1^{(q-1)i}f(x_2,\ldots,x_n)+ x_1^{(q-1)(i+1)}f'(x_1,x_2,\ldots,x_n).$$ 
	\begin{lem}\label{lower-type} $f(x_2,\ldots,x_n)$ is a $q$-power of some polynomial $g(x_2,\ldots,x_n)$. Furthermore, $g(x_2,\ldots,x_n)$ is $\B(x_2,\ldots,x_n)$-invariant modulo $(x_2^{q^{m-1}},\ldots,x_n^{q^{m-1}})$. 
	\end{lem}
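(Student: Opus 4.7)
The plan is to exploit Borel invariance in two stages, corresponding to the two assertions. Throughout, torus invariance of $F$ forces every $x_1$-exponent appearing in $F$ to be a multiple of $q-1$, so the $x_1$-degrees of $F$ jump from $(q-1)i$ directly to the next candidate $(q-1)(i+1)$, which by the hypothesis $(q-1)i < q^m - 1$ (equivalently $i+1 \le [m]_q$) still satisfies $(q-1)(i+1) \le q^m - 1$. Write $F = \sum_{r \ge i} x_1^{(q-1)r} f_r$ with $f_i = f$, and for $\lambda \in \F_q^\times$ let $\sigma = \sigma_{12}(\lambda) \in \B_n$ be the unipotent element sending $x_2 \mapsto x_2 + \lambda x_1$ and fixing all other variables. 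I would extract the coefficient of $x_1^{(q-1)i + k}$ in $\sigma F - F$ for each $k \in \{1, \ldots, q-1\}$: for $k < q - 1$ only the $r = i$ summand contributes to this $x_1$-degree, while for $k = q - 1$ the $r = i+1$ contribution equals $f_{i+1}$ and cancels exactly with the corresponding coefficient in $F$. Writing $f = \sum_{e} x_2^e f_e(x_3, \ldots, x_n)$, the vanishing then reads $\lambda^k \sum_e \binom{e}{k} x_2^{e-k} f_e \equiv 0 \pmod{(x_2^{q^m}, \ldots, x_n^{q^m})}$. Taking $\lambda \neq 0$ and separating distinct $x_2$-powers yields $\binom{e}{k} f_e \equiv 0$ in $\F_q[x_3, \ldots, x_n]/(x_3^{q^m}, \ldots, x_n^{q^m})$ for every $e$ and every $1 \le k \le q - 1$.

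Next, I would apply Lucas' theorem. Whenever $f_e \neq 0$, the constraint forces $\binom{e}{k} \equiv 0 \pmod p$ for all $1 \le k \le q - 1$. Writing $q = p^s$ and $e = \sum_\ell e_\ell p^\ell$ in base $p$: were some digit $e_{j_0}$ with $j_0 < s$ nonzero, then $k := p^{j_0}$ satisfies $k \le p^{s-1} \le q - 1$ and $\binom{e}{k} \equiv e_{j_0} \not\equiv 0 \pmod p$ by Lucas, a contradiction. Hence $q \mid e$. Running the same argument with $x_j \mapsto x_j + \lambda x_1$ in place of $x_2 \mapsto x_2 + \lambda x_1$ for $j = 3, \ldots, n$ shows every exponent in every monomial of $f$ is divisible by $q$. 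Since $c^q = c$ on $\F_q$, we may then write $f = g^q$ for the polynomial $g$ obtained from $f$ by dividing all exponents by $q$ (and leaving the coefficients untouched).

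For the second assertion, let $\sigma \in \B(x_2, \ldots, x_n) \subset \B_n$. Since $\sigma$ fixes $x_1$, the coefficient of $x_1^{(q-1)i}$ in $\sigma F$ equals $\sigma f$, and Borel invariance of $F$ gives $\sigma f \equiv f \pmod{(x_2^{q^m}, \ldots, x_n^{q^m})}$. Writing $f = g^q$ and using the characteristic-$p$ identity $(a - b)^q = a^q - b^q$, this becomes $(\sigma g - g)^q \in (x_2^{q^m}, \ldots, x_n^{q^m})$. For any $h = \sum_J c_J x^J \in \F_q[x_2, \ldots, x_n]$ one has $h^q = \sum_J c_J x^{qJ}$ (using $c_J^q = c_J$), so $h^q$ lies in $(x_2^{q^m}, \ldots, x_n^{q^m})$ if and only if every monomial of $h$ has some $x_j$-exponent $\ge q^{m-1}$, i.e., $h \in (x_2^{q^{m-1}}, \ldots, x_n^{q^{m-1}})$. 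Applied to $h = \sigma g - g$, this gives precisely the required invariance of $g$. The main technical point is the bookkeeping in the first step: one must confirm that all the relevant $x_1$-degrees $(q-1)i + k$ lie in the viable range $[0, q^m - 1]$ (this is exactly where the hypothesis $(q-1)i < q^m - 1$ enters), and that harvesting the full range $k \in [1, q-1]$ is essential — only this range, together with Lucas' theorem, forces divisibility by the full $q = p^s$ rather than merely by $p$.
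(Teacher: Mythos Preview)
Your argument is correct and follows essentially the same strategy as the paper: from invariance under $x_j \mapsto x_j + x_1$, extract $\binom{e}{k} \equiv 0 \pmod p$ for every $x_j$-exponent $e$ occurring in $f$ and every $1 \le k \le q-1$, then invoke Lucas to force $q \mid e$. The only difference is bookkeeping: the paper first multiplies $F$ by the $\B_n$-invariant $x_1^{q^m-q-(q-1)i}$, which pushes the $f'$ piece up to $x_1^{q^m-1}$ so that any further positive power of $x_1$ coming from the substitution kills it modulo $I_m$ automatically, whereas you work directly with $F$ and verify by hand that the $r=i+1$ contribution cancels at $k=q-1$ and that no higher $r$ contributes for $1\le k\le q-1$.
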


 \begin{proof} 
The second statement follows easily once the first is established.

For the first one, multiplying $F$ by the $\B_n$-invariant $x_1^{q^m-q-(q-1)i}$ gives a new $\B_n$-invariant modulo $I_m$:
$$F':=x_1^{q^m-q-(q-1)i}F=x_1^{q^m-q}f(x_2,\ldots,x_n)+x_1^{q^m-1}f'(x_1,\ldots,x_n).$$
Write $f(x_2,\ldots,x_n)$ as a linear combination of $x^J:=x_2^{j_2}\ldots x_n^{j_n}$. For each $2\le k\le n$, we show that $j_k$ is divisible by $q$ for all $J$.

Since $F'$ is $\B_n$-invariant modulo $I_m$, we have $$F'(x_1,\ldots,x_k+x_1,\ldots,x_n)-F'(x_1,\ldots,x_k,\ldots,x_n)=0\mod I_m.$$ 
The difference
$
x_1^{q^m-1}\big[f'(x_1, \ldots, x_k+x_1, \ldots, x_n) - f'(x_1, \ldots, x_k, \ldots, x_n)\big] 
$
is divisible by $x_1^{q^m}$, so it vanishes in the quotient ring $\Qcal_m(n)$.

On the other hand, the difference $x_1^{q^m-q}[f(x_2, \ldots, x_k+x_1, \ldots, x_n) - f(x_2, \ldots, x_k, \ldots, x_n)]$ is a linear combination of
\begin{eqnarray*}
    d(J)&:=&x_1^{q^m-q}\big[   x_2^{j_2 } \ldots (x_k+x_1)^{j_k} \ldots  x_n^{j_n}  -  x_2^{j_2 } \ldots x_k^{j_k} \ldots  x_n^{j_n} \big] \\
    &=&\sum_{s=1}^{j_k}
\binom{j_k}{s} x_1^{q^m-q+s}  x_2^{j_2} \ldots x_k^{j_k -s} \ldots  x_n^{j_n }. 
\end{eqnarray*}
It is clear that if $J\not=J'$ then $d(J)$ and $d(J')$ share no common nontrivial summand.
It follows that for each $J$, we must have $d(J)=0\mod I_m$. By inspecting the exponents of $x_1$, we see that $\binom{j_k}{s}=0$ for all $1\le s\le q-1$. This implies that $j_k$ is divisible by $q$.
\end{proof}

Recall that if
$$Y_b(I; J) :=  \delta_{1;b}^{i_1}( D_1^{j_1} \delta_{2;b}^{i_2} (  D_2^{j_2}  (   \cdots    ( \delta_{k;b}^{i_k}(D_k^{j_k})   )  \cdots  )  ) ),$$
then $\Phi Y_b(I;J)$ is defined by $$\Phi Y_b(I;J)= \delta_{2;b+1}^{i_1}( D_2^{j_1} \delta_{3;b+1}^{i_2} (  D_3^{j_2}  (   \cdots    ( \delta_{k+1;b+1}^{i_k}(D_{k+1}^{j_k})   )  \cdots  )  ) ).$$ 
The next result explains the Frobenius-like property of the $\Phi$ operator.  

\begin{lem}\label{Frob}
	If $f(x_1,\ldots,x_c)$ is a polynomial such that $f(0,x_2,\ldots,x_c)=g(x_2,\ldots,x_c)^q$ and that $\delta_{a+1;b+1}(f)$ is a polynomial, then $\delta_{a+1;b+1}(f)(0,x_2,\ldots,x_{c+1})=\big(\delta_{a;b}(g)(x_2,\ldots,x_{c+1})\big)^q.$ As a consequence, if $Y\in \Bcal_{m-1}(n-1)$, then $\Phi Y(0,x_2,\ldots,x_n)=Y(x_2,\ldots,x_n)^q.$
\end{lem}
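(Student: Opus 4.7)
The plan is to prove the first assertion by direct manipulation of the one-step expansion of $\delta_{a+1;b+1}(f)$, then bootstrap it into the consequence via induction. For the first assertion, recall from Proposition \ref{iterate-delta} with $h=1$ that
\[
\delta_{a+1;b+1}(f)(x_1,\ldots,x_{c+1}) = \sum_{j=1}^{a+1} \frac{x_j^{q^{b+1}}\, f(x_1,\ldots,\widehat{x_j},\ldots,x_{c+1})}{V(x_1,\ldots,\widehat{x_j},\ldots,x_{a+1};\,x_j)}.
\]
Setting $x_1=0$ kills the $j=1$ term via the factor $x_1^{q^{b+1}}$. For $j\ge 2$ the numerator becomes $x_j^{q^{b+1}}\, g(x_2,\ldots,\widehat{x_j},\ldots,x_{c+1})^q$ by hypothesis, and for the denominator I would invoke the elementary identity
\[
V(0,y_2,\ldots,y_k;\,X) = V(y_2,\ldots,y_k;\,X)^q,
\]
which follows from the product expansion of $V$: once the first argument vanishes, the corresponding coefficient $\lambda_1$ ranges freely over $\F_q$ and contributes a $q$-th power. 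Each surviving summand is thus a $q$-th power, and since $q$ is a power of the characteristic, the sum of $q$-th powers equals the $q$-th power of the sum; the resulting inner sum is precisely the expansion of $\delta_{a;b}(g)(x_2,\ldots,x_{c+1})$.

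For the consequence, I would first establish the Dickson-type identity $D_k(0,x_2,\ldots,x_k) = D_{k-1}(x_2,\ldots,x_k)^q$ by applying the same $V(0,\ldots)=V(\ldots)^q$ identity to the fundamental equation $V(x_1,\ldots,x_k;X) = X^{q^k} + \sum_{i=0}^{k-1}(-1)^{k-i} Q_{k,i} X^{q^i}$ and matching the coefficient of $X^{q^{k-1}}$. Then I would prove by induction on the length $k$ of the tuples $I,J$ the slightly more general statement
\[
Y_{b+1}^{(r+1)}(I;J)(0,x_2,\ldots) = Y_b^{(r)}(I;J)(x_2,\ldots)^q,
\]
where I write $Y_b^{(r)}(I;J):=\delta_{r;b}^{i_1}(D_r^{j_1}\delta_{r+1;b}^{i_2}(\cdots \delta_{r+k-1;b}^{i_k}(D_{r+k-1}^{j_k})\cdots))$; the desired equality $\Phi Y(0,x_2,\ldots)=Y(x_2,\ldots)^q$ is then the case $r=1$, and the induction runs uniformly in $r$.

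For the inductive step, peel off the outermost $\delta_{r+1;b+1}^{i_1}$ of $Y_{b+1}^{(r+1)}(I;J)$: its interior $D_{r+1}^{j_1}\cdot Y_{b+1}^{(r+2)}(I';J')$, with $I'=(i_2,\ldots,i_k)$ and $J'=(j_2,\ldots,j_k)$, restricts at $x_1=0$ to the $q$-th power of $D_r^{j_1}\cdot Y_b^{(r+1)}(I';J')$ by the Dickson identity combined with the inductive hypothesis. Applying the first part of the lemma then computes the $x_1=0$ restriction of one iterate, and the output again satisfies a $q$-th power condition at $x_1=0$ of exactly the form $\delta_{r;b}(\text{square root})^q$, so the first part of the lemma may be reapplied. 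Iterating $i_1$ times yields $Y_{b+1}^{(r+1)}(I;J)(0,x_2,\ldots) = Y_b^{(r)}(I;J)^q$. The main obstacle I anticipate is this bookkeeping: at each of the $i_1$ steps one must verify that the $q$-th power hypothesis is preserved in the correct shape so that the single-step statement reapplies, and that the polynomiality required by its hypothesis is available along the way; both are guaranteed by the polynomiality results of the preceding sections.
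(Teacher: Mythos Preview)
Your proof is correct and follows essentially the same approach as the paper's own (very terse) argument: the first statement is unpacked from the definition of $\delta$ (you use the Laplace-expanded sum, the paper gestures at the determinant quotient, and the key input either way is the identity $V(0,y_2,\ldots,y_k,X)=V(y_2,\ldots,y_k,X)^q$), and the consequence is obtained by the same induction on the length of $(I,J)$, invoking the polynomiality results of \S\ref{Y-Polynomiality} at each step. One small point worth making explicit: in the $j=1$ term the denominator $V(x_2,\ldots,x_{a+1},x_1)$ also vanishes at $x_1=0$, but only to first order, so the factor $x_1^{q^{b+1}}$ with $q^{b+1}\ge q>1$ indeed forces that term to $0$.
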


\begin{proof}
	The first statement follows from the definition of $\delta$ as a quotient of determinants. The second then follows by induction, using the polynomiality of $Y$.
\end{proof}

We are now ready to prove the main theorem.

\begin{proof}[Proof of the spanning property for $\Bcal_m(n)$] 
Let $F$ be a homogeneous $\B_n$-invariant in $\Qcal_m(n)$ and write $F$ in the form 
	 $$F(x_1,\ldots,x_n)=x_1^{(q-1)i}f(x_2,\ldots,x_n)+ x_1^{(q-1)(i+1)}f'(x_1,x_2,\ldots,x_n).$$ 
We call $x_1^{(q-1)i}f(x_2,\ldots,x_n)$ the lowest part of $F$. If $i=[m]_q$ then it follows from Lemma \ref{top-type} that $f(x_1,\ldots,x_{n-1})$ is in $\Qcal_{m}(n-1)^{\B_{n-1}}$, and so by inductive hypothesis, $F$ is spanned by $\{\delta_{1;m}(Y)\mid Y\in \Bcal_{m}(n-1) \}$.

If $i<[m]_q$ then, by Lemma \ref{lower-type}, 
$f(x_2,\ldots,x_n)=g(x_2,\ldots,x_n)^q,$
where
$g(x_1,\ldots,x_{n-1})$ is in $\Qcal_{m-1}(n-1)^{\B_{n-1}}$. By inductive hypothesis, $g(x_2,\ldots,x_{n})$ can be written in the form
$$g(x_2,\ldots,x_n)=\sum_{Y} c_YY(x_2,\ldots,x_n)$$
where $Y\in \Bcal_{m-1}(n-1).$ Let $$F'=F-\sum_{Y} c_YD_1^i\Phi Y.$$
Then $F'$ is also an $\B_n$-invariant. By Lemma \ref{Frob}, the degree of $x_1$ in the lowest part of $F'$ is strictly greater than that of $F$, so we can repeat the above process with $F'$ to finish the proof. 
\end{proof}

\begin{proof}[Proof of the linearly independent property for $\Bcal_m(n)$]
	The first subset $\{\delta_{1;m}(Y)\mid Y\in \Bcal_{m}(n-1) \}$ of $\Bcal_{m}(n)$ is clearly linearly independent by inductive hypothesis. 
	
	It suffices to prove the linear independence of the subset $\{ D_1^a \Phi(Y) \mid Y\in \Bcal_{m-1}(n-1) \}$ for each $a<[m]_q$. 
	By Lemma \ref{Frob},  $$D_1^a \Phi(Y) =x_1^{a(q-1)}Y(x_2,\ldots,x_n)^q+ \text{monomials with $x_1$-degree $>a(q-1)$}.$$ It follows that if 
	$\sum_{Y} c_{Y} D_1^a\Phi(Y)=0$ modulo $(x_1^{q^{m}},\ldots,x_n^{q^{m}})$,  then we have $\sum_{Y} c_{Y} Y(x_2,\ldots,x_n)^q=0$ modulo $(x_2^{q^{m}},\ldots,x_n^{q^{m}})$, and so $\sum_{Y} c_{Y} Y(x_2,\ldots,x_n)=0$ modulo $(x_2^{q^{m-1}},\ldots,x_n^{q^{m-1}})$. 
By inductive hypothesis, we conclude that the coefficients $c_Y$'s vanish and the proof is complete.
\end{proof}

\begin{cor}\label{lm Yb}
In graded lexicographic order, the smallest monomial occuring in $Y_m (I, J)$ equals: 
\[\prod_{s=1}^k x_{i_1+\ldots +i_{s-1}+s}^{q^m-q^{s-1}} \cdots x_{i_1+\ldots +i_s+s-1}^{q^m-q^{s-1}} x_{i_1 + \ldots+ i_s +s}^{j_s q^{s-1}(q-1)}.\]
\end{cor}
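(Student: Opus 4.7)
The plan is to prove Corollary \ref{lm Yb} by induction on $n = k + i_1 + \cdots + i_k$, the number of variables appearing in $Y_m(I;J)$, with a case split on whether $i_1 \ge 1$ or $i_1 = 0$. Because each operator $\delta_{a;m}$ is homogeneous of degree $q^m - q^{a-1}$ and each $D_a^{j_a}$ is homogeneous, the polynomial $Y_m(I;J)$ is homogeneous, so its smallest monomial in graded lexicographic order coincides with its smallest monomial in plain lexicographic order with $x_1 > x_2 > \cdots$. Such a smallest monomial can be identified by successively minimizing the exponents of $x_1, x_2, \ldots$. The base case $n = 1$ corresponds to $k=1, i_1 = 0$, for which $Y_m(0; j_1) = D_1^{j_1} = x_1^{j_1(q-1)}$ matches the formula.

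For the inductive step in the case $i_1 \ge 1$, we have $Y_m(I;J) = \delta_{1;m}(Y_m(I''; J))$ with $I'' = (i_1-1, i_2, \ldots, i_k)$. Since $\delta_{1;m}(f)(x_1,\ldots,x_{c+1}) = x_1^{q^m-1} f(x_2,\ldots,x_{c+1})$, every monomial of $Y_m(I;J)$ carries $x_1$-exponent $q^m-1$, and its remaining factor arises from a monomial of $Y_m(I''; J)$ by the substitution $x_i \mapsto x_{i+1}$. Applying the inductive hypothesis to $Y_m(I''; J)$ (which involves $n-1$ variables) then yields the desired formula after reindexing. If instead $i_1 = 0$, unpacking the definitions gives
\[
Y_m(I;J) = D_1^{j_1} \cdot \Phi Y_{m-1}(I'; J'), \qquad I' = (i_2,\ldots,i_k), \ J' = (j_2,\ldots,j_k).
\]
Because $D_1^{j_1} = x_1^{j_1(q-1)}$ involves only $x_1$, it suffices to find the lowest $x_1$-component of $\Phi Y_{m-1}(I'; J')$. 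By Lemma \ref{Frob}, evaluating at $x_1 = 0$ yields $Y_{m-1}(I'; J')(x_2,\ldots,x_n)^q$, which is nonzero by the inductive hypothesis applied to $Y_{m-1}(I'; J')$ (again an $(n-1)$-variable polynomial). Hence the $x_1$-free part of $\Phi Y_{m-1}(I';J')$ equals this $q$-th power, and the smallest monomial of $Y_m(I;J)$ equals $x_1^{j_1(q-1)}$ times the $q$-th power of the smallest monomial of $Y_{m-1}(I'; J')$ (after the substitution $x_i \mapsto x_{i+1}$), using that $(\cdot)^q$ is a ring homomorphism in characteristic $p$.

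The main obstacle is the combinatorial bookkeeping needed to align the recursive output with the statement. In the $i_1 = 0$ case, the inductive formula contributes exponents $q^{m-1} - q^{s'-1}$ and $j'_{s'} q^{s'-1}(q-1)$ for $s' = 1, \ldots, k-1$; these are transformed by the $q$-th power into $q^m - q^{s'}$ and $j'_{s'} q^{s'}(q-1)$. The reindexing $s = s' + 1$, combined with the identity $I_s = I'_{s-1}$ (valid because $i_1 = 0$) and the variable shift $x_i \mapsto x_{i+1}$, aligns these with the $s \ge 2$ summands in the statement, while the $s = 1$ summand accounts for the isolated factor $x_1^{j_1(q-1)}$. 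The $i_1 \ge 1$ case requires an analogous but simpler index shift: prepending $x_1^{q^m-1}$ and shifting indices by one turns the formula for $Y_m(I''; J)$ into the formula for $Y_m(I;J)$.
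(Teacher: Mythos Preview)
Your proof is correct and follows essentially the same approach as the paper. The paper states Corollary~\ref{lm Yb} without a separate proof, as a direct consequence of the inductive arguments used to establish linear independence: the two cases $i_1\ge 1$ and $i_1=0$ correspond precisely to the two pieces $\delta_{1;m}(\Bcal_m(n-1))$ and $\{D_1^a\Phi(Y)\}$ in the recursive definition of $\Bcal_m(n)$, and the key input in the second case is exactly Lemma~\ref{Frob}, just as you use it.
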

\section{The Hilbert series}
\label{Hilbert}

In this section, we verify the Lewis, Reiner and Stanton conjecture \cite{LewisReinerStanton2017} about the Hilbert 
series $F_{n,m}(t)$ \footnote{This function should be $C_{1^n,m}(t)$ in the notation of \cite{LewisReinerStanton2017}.} of $\Qcal_m(n)^{\B_n}$. Recall the conjecture states that
\[
F_{n,m}(t)=\sum_{\beta\le 1^n, |\beta|\le m }t^{e(m,1^n,\beta)}\bmt{m\\ \beta, m-|\beta| }_{q,t},
\] 
 where
\[
e(m,1^n,\beta) = \sum_{i=1}^{n} (1-\beta_i)(q^m-q^{B_i}),   
\] 
$B_i$ denotes the partial sum $B_i = \sum_{j=1}^{i} \beta_j$, and the $(q,t)$-multinomial coefficient is given by
\[
\bmt{m\\ \beta,m-\left | \beta \right |}_{q,t} = \frac{\prod_{j=0}^{m-1} (1-t^{q^m-q^j})} {\prod_{i=1}^{n} \prod_{j=0}^{\beta_i-1} (1-t^{q^{B_i} - q^{B_{i-1} + j}}) }.
\] 
We have shown that the set $\Bcal_m (n)$ forms a basis for the $\mathbb{F}_q$-vector space $\Qcal_m (n)$, and it can be constructed inductively:  
\[
\Bcal_{m}(n)=\{\delta_{1;m}(Y)\mid  Y\in \Bcal_m(n-1) \}\sqcup
\{ D_1^a \Phi(Y) \mid  a<[m]_q, Y\in \Bcal_{m-1}(n-1) \}.\]

It is clear that $F_{n,0}(t) =1$, and by convention, we set $F_{0,m} (t)= 1$. The next result shows a similar inductive formula for $F_{n,m} (t)$. 
\begin{prop} 
For $m,n \geq 1$, $F_{n,m}(t)$ satisfies the relation:
\[
F_{n,m}(t)=t^{q^m-1}F_{n-1,m}(t)+\frac{1-t^{q^m-1}}{1-t^{q-1}}F_{n-1,m-1}(t^q).
\] 
\end{prop}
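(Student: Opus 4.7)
The plan is to exploit the inductive description of the basis $\Bcal_m(n)$ from Definition \ref{B-basis}, which expresses $\Bcal_m(n)$ as a disjoint union of two families, and then compute the Hilbert series contribution of each family separately. By Theorem \ref{main}, $\Bcal_m(n)$ is a basis of $\Qcal_m(n)^{\B_n}$, so it suffices to sum $t^{\deg Y}$ over the two parts of this disjoint decomposition.

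For the first family $\{\delta_{1;m}(Y) \mid Y \in \Bcal_m(n-1)\}$, recall from the definition of $\delta_{1;b}$ that
\[
\delta_{1;m}(Y)(x_1,\ldots,x_n) = x_1^{q^m-1}\, Y(x_2,\ldots,x_n),
\]
so the operation $\delta_{1;m}$ raises the total degree by exactly $q^m-1$ and acts bijectively onto its image. Consequently this family contributes $t^{q^m-1}F_{n-1,m}(t)$ to $F_{n,m}(t)$.

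For the second family $\{D_1^a\,\Phi(Y) \mid a<[m]_q,\ Y \in \Bcal_{m-1}(n-1)\}$, I will argue in two steps. First, $D_1^a = x_1^{a(q-1)}$ has degree $a(q-1)$, and as $a$ ranges over $0,1,\ldots,[m]_q-1$ the prefactor contributes the geometric sum
\[
\sum_{a=0}^{[m]_q-1} t^{a(q-1)} \;=\; \frac{1-t^{[m]_q(q-1)}}{1-t^{q-1}} \;=\; \frac{1-t^{q^m-1}}{1-t^{q-1}}.
\]
Second, I claim that $\Phi Y$ is homogeneous of degree $q\,\deg Y$ for every $Y\in\Bcal_{m-1}(n-1)$. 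Homogeneity is automatic since each $\delta_{a;b}$ and each $D_a$ is homogeneous, and $\Phi Y$ is built from these by composition/multiplication. The precise value of the degree then follows from Lemma \ref{Frob}: setting $x_1 = 0$ gives $\Phi Y(0,x_2,\ldots,x_n) = Y(x_2,\ldots,x_n)^q$, and the right-hand side is a nonzero polynomial of degree $q\,\deg Y$, forcing $\deg \Phi Y = q\,\deg Y$. Therefore, replacing $t$ by $t^q$ converts $F_{n-1,m-1}(t)$ into the generating series for the set $\{\Phi(Y) \mid Y\in \Bcal_{m-1}(n-1)\}$, and the second family contributes $\tfrac{1-t^{q^m-1}}{1-t^{q-1}}\,F_{n-1,m-1}(t^q)$.

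Adding the two contributions gives the desired recursion. The only potentially subtle point is the degree computation for $\Phi Y$, but this is handled cleanly by the Frobenius-like property in Lemma \ref{Frob} together with the polynomiality and homogeneity of $Y$, so no serious obstacle is expected.
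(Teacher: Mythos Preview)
Your proof is correct but takes a genuinely different route from the paper's. You establish the recursion for the actual Hilbert series by exploiting the basis $\Bcal_m(n)$ and its inductive description (Definition \ref{B-basis}), computing the degree shifts induced by $\delta_{1;m}$ and by $D_1^a\Phi$; the key step---that $\deg\Phi Y = q\deg Y$---you handle neatly via homogeneity and Lemma \ref{Frob}. The paper, by contrast, never touches the basis in this proof: it works entirely with the closed expression $\sum_{\beta\le 1^n,\,|\beta|\le m} t^{e(m,1^n,\beta)}\bmt{m\\ \beta,m-|\beta|}_{q,t}$, splits the sum according to whether $\beta_1=0$ or $\beta_1=1$, and verifies by direct manipulation of $e(m,1^n,\beta)$ and of the $(q,t)$-multinomial coefficients that the two pieces equal $t^{q^m-1}F_{n-1,m}(t)$ and $\tfrac{1-t^{q^m-1}}{1-t^{q-1}}F_{n-1,m-1}(t^q)$ respectively.

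The distinction matters for the overall logic of Section \ref{Hilbert}. The goal there is to verify the Lewis--Reiner--Stanton formula, and for that one needs the recursion to hold for \emph{both} the Hilbert series and the conjectural closed formula (together with matching initial values), so that induction forces them to coincide. Your argument supplies the Hilbert-series side of this; the paper's computation supplies the formula side. So while your proof is valid for the proposition as literally stated (with $F_{n,m}(t)$ the Hilbert series of $\Qcal_m(n)^{\B_n}$), it does not replace the paper's calculation---the latter is precisely the ingredient still needed to deduce the Corollary.
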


\begin{proof}
We have
\begin{eqnarray*}
	F_{n,m}(t) &=& \sum_{\beta \leq 1^n, \left | \beta \right | \leq m}{t^{e(m,1^n,\beta)} \bmt{m\\ \beta,m-\left | \beta \right |}_{q,t}}\\
	&=& \sum_{\beta=(0,{\beta}') \leq 1^n, \left | \beta \right | \leq m}{t^{e(m,1^n,\beta)} \bmt{m\\ \beta,m-\left | \beta \right |}_{q,t}} +  \sum_{\beta=(1,{\beta}') \leq 1^n, \left | \beta \right | \leq m}{t^{e(m,1^n,\beta)} \bmt{m\\ \beta,m-\left | \beta \right |}_{q,t}}.
\end{eqnarray*}
The first term in this sum can be written as follows:
\begin{eqnarray*}
	\sum_{\beta=(0,{\beta}') \leq 1^n, \left | \beta \right | \leq m}{t^{e(m,1^n,\beta)} \bmt{m\\ \beta,m-\left | \beta \right |}_{q,t}} &=& t^{q^m-1} \sum_{{\beta}' \leq 1^{n-1}, \left | {\beta}' \right | \leq m}{t^{e(m,1^{n-1},{\beta}')} \bmt{m\\ {\beta}',m-\left | {\beta}' \right |}_{q,t}}\\
	&=& t^{q^m-1}F_{n-1,m}(t).
\end{eqnarray*}
For the second term, observe that, with $\beta=(1,\beta_2,\ldots,\beta_n) = (1,{\beta}')$, we have
	$$t^{e(m,1^n,\beta)} = {\left( t^q \right)}^{e(m-1,1^{n-1},{\beta}')},$$
	and $$\bmt{m\\ \beta,m-\left | \beta \right |}_{q,t} = \frac{1-t^{q^m-1}}{1-t^{q-1}} \bmt{m-1\\ {\beta}',m-1-\left | {\beta}' \right |}_{q,t^q}.$$
Hence the second term can be rewritten as:
\begin{eqnarray*}
	\sum_{\beta=(1,{\beta}') \leq 1^n, \left | \beta \right | \leq m}{t^{e(m,1^n,\beta)} \bmt{m\\ \beta,m-\left | \beta \right |}_{q,t}} &=& \frac{1-t^{q^m-1}}{1-t^{q-1}}  \sum_{{\beta}' \leq 1^{n-1}, \left | \beta \right | \leq m-1}{(t^q)^{e(m-1,1^{n-1},{\beta}')} \bmt{m-1\\ {\beta}',m-1-\left | {\beta}' \right |}_{q,t^q}} \\
	&=&\frac{1-t^{q^m-1}}{1-t^{q-1}} F_{n-1,m-1}(t^q).
\end{eqnarray*}
The result follows.
\end{proof}

\begin{proof}[Proof of Proposition \ref{B(m,n) description}]
In the remaining part of this section, we prove Proposition \ref{B(m,n) description} about the decomposition of $\Bcal_m(n)$:
\[
\Bcal_{m}(n)=\coprod_{k=1}^{\min(n,m+1)}\Bcal_m^k(n),
\]
where $\Bcal_m^k(n)$ denotes the set consisting of all the elements $Y_m(I; J)$ for which the sequences $I=(i_1,\ldots,i_k)$ and $J=(j_1,\ldots,j_k)$ satisfy the following conditions:
	$$\begin{cases}
		i_1+\cdots+i_k=n-k,\\ 
		j_1 < [m]_q, 
		\dots,
		j_{k-1}< [m-k+2]_q,
		j_k\le [m-k+1]_q.
	\end{cases}$$
 First of all, it is easy to see that $\delta_{1;m} \Bcal_{m} (n-1)$ corresponds to the set $Y_m (I, J)$ such that 
	\[ 
	  \begin{cases}
	 	i_1+\cdots+i_k=n-k,\\
	 	i_1 \geq 1,\\ 
	 	j_1 < [m]_q, 
	 	\cdots,
	 	j_{k-1}< [m-k+2]_q,
	 	j_k\le [m-k+1]_q.
	 \end{cases}
	\]
	Similarly, the set $\{D_1^a \Phi Y \mid a <[m]_q, Y \in \Bcal_{m-1}(n-1)\}$ corresponds to those $Y_m (I,J)$ where $i_1=0$.
	The proof of this is straightforward (but tedious), and the two cases $n \leq m+1$ or $n > m+1$ need to be considered separately. We leave the details for the interested reader.   
	\end{proof}

 \begin{rmk}\label{rmk: summand of Fnm}
Suppose $\beta = (\beta_1, \ldots, \beta_n)$ is a weak composition of non-negative integers such that $\beta \leq 1^n$ and $\left | \beta \right | \leq m$. The summand $t^{e(m,1^n, \beta)} \bmt{m\\ \beta,m-\left | \beta \right |}_{q,t}$ of $F_{n,m} (t)$ can be described as follows: It is the Hilbert series for the subspace of $\Qcal_m (n)^{\B_n}$  spanned by those $Y_m (I, J)$s where the pairs $(I, J)$ satisfy the following conditions: 
\begin{enumerate}
    \item $\beta_i =1$ in positions $i_1 + \ldots +i_s +s$, for $1 \leq s \leq k-1$.
    \item When $s=k$, $i_1 + \ldots + i_k +k =n$, then $\beta_n = 0$ if $j_k = [m-k+1]_q$ and $\beta_n =1$ if $j_k < [m-k+1]_q$. 
    \item $\beta_i = 0$ in all other positions. 
\end{enumerate}
 \end{rmk}

\section{A bijection} \label{bijection}
 Lewis, Reiner and Stanton arrived at the parabolic conjectures by comparing the spaces of invariants of two Brauer isomorphic $\GL_n (\mathbb{F}_q)$-representations, namely the graded $\Qcal_m (n)$ and the ungraded parking space $\mathbb{F}_q [\mathbb{F}_{q^m}^n]$. In this section, we will construct a bijection between the $\B_n$-invariant subspaces
\[
\Qcal_{m}(n)^{\B_n} \to (\mathbb{F}_q [\mathbb{F}_{q^m}^n])^{\B_n}, 
\]or rather a set bijection from our basis $\Bcal_m (n)$ to another natural basis 
for $(\mathbb{F}_q [\mathbb{F}_{q^m}^n])^{\B_n}$ that we are going to describe. The bijective map indicates that in a sense, our basis for the $\mathbb{F}_q$-vector space $\Qcal_m(n)$ is a natural one. 

We begin by recalling the description in terms of flags of the $\B_n$-invariant subspace of $\mathbb{F}_q [\mathbb{F}_{q^m}^n]$ as analyzed in \cite[Section 6]{LewisReinerStanton2017}. Since $\mathbb{F}_q [\mathbb{F}_{q^m}^n]$ admits a permutation action of $\GL_n$, its $\B_n$-invariants can be identified with its $\B_n$-orbits. A $(\beta, m - \left|\beta\right|)$-flag in $\mathbb{F}_{q^m}$ is a nested sequence of $\mathbb{F}_q$-subspaces:  
\[
\{0\} = V_0 \subset V_1 \subset \ldots \subset V_{n} \subset \mathbb{F}_{q^m}
\]
such that $\dim V_{\ell} =  \sum_{j=1}^{\ell} \beta_j$. Let $X_{m}(n)$ denote the set of all $(\beta, m - \left|\beta\right|)$-flags where $\beta$ is a weak composition satisfying the following $2$ conditions:
\[
\beta \leq 1^n \quad \text{and} \quad \left|\beta\right| \leq m. 
\]
The first condition implies $\beta_i \leq 1$ for all $i$. Thus, the dimension jump of the spaces, except possibly at $V_n$, in a $(\beta, m-\left|\beta\right|)$-flag is at most $1$. We will call an element of $X_{m}(n)$ a $\B_n$-flag. It was proved that the set $X_m(n)$ naturally indexes the $\B_n$-orbits of $\mathbb{F}_{q^m}^n$, and hence forms a basis of $(\mathbb{F}_q [\mathbb{F}_{q^m}^n])^{\B_n}$. 

We first consider the case $n=1$. In this case, $X_m(1)$ consists of 2 families: The first family, corresponding to $\beta = (0)$, which has only one flag - the trivial flag $V_0 = V_1 = \{0\}$. The second family, which corresponds to $\beta = (1)$ consists of all sequences $\{0\} = V_0 \subset V_1 \subset \mathbb{F}_{q^m}$ where $\dim V_1 = 1$. 

We construct a bijection $\Bcal_m (1) \to X_{m}(1)$ by sending $Y_m (0, [m]_q)$ to the trivial flag, and $Y_m (0, j_1)$ with $0 \leq j_1 < [m]_q$ bijectively to the 
set of $[m]_q$ lines in $\mathbb{F}_{q^m}$. 
  
When $n>1$, for each fixed weak composition $\beta$ with $\beta \leq 1^n$ and $\left | \beta \right | \leq m$, we construct a bijection from the set $Y_{\beta}$ of $(\beta, m-\left | \beta \right |)$-flags to the set of polynomials $Y_m (I,J)$ where $(I, J)$ satisfies the conditions given in Remark \ref{rmk: summand of Fnm} as follows: The first space $V_1$ of dimension $1$ in $\mathbb{F}_{q^m}$ corresponds bijectively to the set $0 \leq j_1 < [m]_q$. A 2-dimensional space containing $V_1$ corresponds to lines in a $(m-1)$-dimensional vector space, which corresponds bijectively to the set $0 \leq j_2 <[m-1]_q$, etc.

\section{Conjectures}
The Lewis-Reiner-Stanton conjectures provide a description of the Hilbert series of the invariant ring of the truncated polynomial algebra under the action of parabolic subgroups. In this section, we offer a refinement of their conjectures by proposing an explicit $\mathbb{F}_q$-basis for the invariant ring $\Qcal_m (n)^{\Par_{n_1,\ldots,n_k}}$ where $\Par_{n_1,\ldots,n_k}$ is a parabolic subgroup of $\GL_n$. In this section, $m$ is fixed, and we write $\delta_a$ for $\delta_{a;m}$.

 	We begin with a result describing how to obtain higher invariants from lower ones.  
		\begin{prop}\label{GLs to GLn}
		If $0 \leq s\leq n$ and $f(x_1,\ldots,x_s)$ is a $\GL_s$-invariant polynomial, then the polynomial $\delta^{n-s}_{s+1}(f)$ is $\GL_n$-invariant modulo $(x_1^{q^m},\ldots,x_n^{q^m}).$
	\end{prop}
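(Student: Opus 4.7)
The plan is to verify three properties of $F := \delta_{s+1;m}^{n-s}(f)$ separately: that $F$ is a polynomial, that $F$ is exactly $S_n$-invariant, and that $F$ is $\B_n$-invariant modulo $I_{n,m} := (x_1^{q^m}, \ldots, x_n^{q^m})$. Since $\GL_n = \B_n \cdot S_n \cdot \B_n$ by the Bruhat decomposition, and since $I_{n,m}$ is $\GL_n$-stable, for any $g = b_1 \pi b_2 \in \GL_n$ we then have $g F \equiv b_1 \pi F \equiv b_1 F \equiv F \pmod{I_{n,m}}$, giving the desired $\GL_n$-invariance modulo $I_{n,m}$.

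Polynomiality is a special case of the generalization of the Corollary to Proposition \ref{delta-poly}: take $k = s+1$, $i_a = 0$ and $u_a = 1$ for $a \le s$, $i_{s+1} = n-s$, and $u_{s+1} = f$; the required hypothesis that $u_{s+1}$ be $\GL_s$-invariant in the first $s$ variables is precisely our assumption on $f$. For $\B_n$-invariance modulo $I_{n,m}$, note that $\GL_s$-invariance of $f$ immediately implies that $f$ is $(s,m)$-invariant. Now iterate Proposition \ref{k-invariant}: each application of $\delta_{s+1;m}$ sends a $(k,m)$-invariant polynomial in $k$ variables to a $(k+1,m)$-invariant polynomial in $k+1$ variables (the hypothesis $r \le k+1$ reads $s+1 \le k+1$, which holds throughout since $k$ starts at $s$ and only grows). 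After $n-s$ iterations, $F$ is $(n,m)$-invariant, which is exactly $\B_n$-invariance modulo $I_{n,m}$.

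The main step, and the only place where some care is needed, is the $S_n$-symmetry. By Proposition \ref{iterate-delta},
\[
F \;=\; \sum_{\substack{I \subset [n] \\ |I|=n-s}} \frac{f(\bar I)\,\varphi^m(I)}{V(\bar I, I)}.
\]
For $\sigma \in S_n$ acting by $x_i \mapsto x_{\sigma(i)}$, the map $I \mapsto \sigma(I)$ is a bijection on the $(n-s)$-element subsets of $[n]$. Both $\varphi^m(I) = \prod_{i \in I} x_i^{q^m}$ and $V(\bar I, I) = \prod_{i \in I} V(\bar I, i)$ are genuine functions of the underlying sets; the symmetry of $V(J,i)$ in $J$ is manifest from the factorization $V(J,i) = \prod_{\lambda \in \F_q^{|J|}}(\lambda_1 x_{j_1} + \cdots + \lambda_{|J|} x_{j_{|J|}} + x_i)$. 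Consequently $\sigma \cdot \varphi^m(I) = \varphi^m(\sigma I)$ and $\sigma \cdot V(\bar I, I) = V(\overline{\sigma I}, \sigma I)$. The $\GL_s$-invariance of $f$, in particular its $S_s$-symmetry, ensures $\sigma \cdot f(\bar I) = f(\overline{\sigma I})$. Reindexing by $I' = \sigma I$ gives $\sigma \cdot F = F$. The chief delicacy is precisely this unordered-set interpretation of $V(\bar I, I)$; once it is clear, the symmetry of the sum falls out automatically, and the three properties combine via Bruhat to finish the proof.
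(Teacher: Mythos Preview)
Your proof is correct and follows essentially the same approach as the paper: both establish polynomiality via Proposition~\ref{delta-poly}, $S_n$-symmetry from the subset-sum formula of Proposition~\ref{iterate-delta}, and $\B_n$-invariance modulo $I_{n,m}$ by iterating Proposition~\ref{k-invariant}. You are simply more explicit where the paper is terse---spelling out the Bruhat decomposition to combine symmetry with Borel-invariance, and verifying in detail that the summands in Proposition~\ref{iterate-delta} depend only on unordered sets (using the $S_s$-symmetry of $f$)---whereas the paper compresses all of this into three lines.
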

	\begin{proof}Since $\delta^{n-s}_{s+1}(f)$ is symmetric by Proposition \ref{iterate-delta}, it is sufficient to show that $\delta^{n-s}_{s+1}(f)$ is $(n,m)$-invariant. But since $f$ is a polynomial which is $\GL_s$-invariant, 
 $\delta^{n-s}_{s+1}(f)$ is a polynomial by Proposition \ref{delta-poly}. Finally it is $(n,m)$-invariant by Proposition \ref{k-invariant}.
	\end{proof}
This Proposition indicates that we may be able to obtain $\GL_n$-invariants from $\GL_s$-invariants by applying appropriate iteration of the operator $\delta_{s+1}$. Note that $\delta_{s+1}^{n-s}$ raises the degree by $(n-s)(q^m-q^s)$.     

 Lewis, Reiner and Stanton \cite{LewisReinerStanton2017} conjectured that the Hilbert series of $\Qcal_m(n)^{\GL_n}$ is given by the following formula: 
\begin{equation}\label{Hnm} 
C_{n,m}(t)=\sum_{s=0}^{\min(m,n)}t^{(n-s)(q^m-q^s)}\bmt{m\\s}_{q,t}, 
\end{equation}
where the $(q,t)$-binomial coefficient $\bmt{m\\s}_{q,t}$ is given by
\[
\bmt{m\\s}_{q,t}=\prod_{i=0}^{s-1}\frac{1-t^{q^m-q^i}}{1-t^{q^s-q^i}}.
\] 
This further generalization of the classical binomial coefficients was introduced and studied extensively in \cite{Reiner-Stanton2010}. Formula \eqref{Hnm} for the conjectural Hilbert series $C_{n,m}(t)$ and Proposition \ref{GLs to GLn} suggests that in order to obtain $\Qcal_m(n)^{\GL_n}$, one needs only apply the operator $\delta_{s+1}^{n-s}$ on a part of the Dickson algebra $\mathcal{D}_s$ of $\GL_s$-invariants whose Hilbert series equals $\bmt{m\\s}_{q,t}$. 

It turns out that a candidate for such a piece of the Dickson algebra was already discussed in Section 5 of \cite{LewisReinerStanton2017}. We review their discussion here. A typical element in $\mathcal{D}_s$ is of the form 
\[
Q_{s,s-1}^{m_1} \ldots Q_{s,s-i}^{m_i} \ldots Q_{s,s-s}^{m_s},  
\]
and can be associated in an obvious way with a partition $\mu$ whose part sizes are restricted to the set $\{q^s - q^{s-1}, q^s-q^{s-2}, \ldots, q^s-q^0\}$. Such a partition $\mu$ is said to be $q$-compatible with another partition $\lambda$ having at most $s$ nonzero parts ($\lambda_{s+1}=0$) if for all $1 \leq i \leq s$, 
\[\frac{q^{\lambda_i}- q^{\lambda_{i+1}}}{q-1} \leq m_i < \frac{q^{\lambda_i}- 
q^{\lambda_{i+1}}}{q-1} + q^{\lambda_i} = \frac{q^{\lambda_i+1}-q^{\lambda_{i+1}}}{q-1}.
\]
Each partition $\mu$ associated to a Dickson monomial $Q_{s,s-1}^{m_1} \ldots Q_{s,0}^{m_s}$ is $q$-compatible with at most one partition $\lambda$ with at most $s$ nonzero parts. Let $\Delta^m_s$ denote the set of all partitions $\mu$ which are $q$-compatible with a $\lambda$ that fits inside an $s \times (m-s)$ rectangle (that is, $\lambda$ has at most $s$ nonzero parts and $\lambda_1 \leq m-s$). In terms of Dickson monomials, $\Delta^m_s$ is explicitly given by 
\[
\Delta^m_s=\coprod_{m-s \geq \lambda_1 \geq \ldots \geq \lambda_s \geq \lambda_{s+1}=0 } \bigl\{ Q_{s,s-1}^{m_1} \ldots Q_{s,0}^{m_s} \mid  \frac{q^{\lambda_i}- q^{\lambda_{i+1}}}{q-1} \leq m_i < \frac{q^{\lambda_i+1}- q^{\lambda_{i+1}}}{q-1}, i=1,\ldots,s \bigr\}.
\]
 It follows from \cite[Proposition 5.6]{Reiner-Stanton2010} that the Hilbert series of the set  is exactly $\bmt{m\\s}_{q,t}$. 
We now describe another candidate, which seems more convenient to work with, for a part of the Dickson algebra whose Hilbert series is $\bmt{m\\s}_{q,t}$. 
For each partition $\lambda = (\lambda_1 \geq \ldots \geq \lambda_s \geq 0)$, Macdonald \cite{Macdonald-Schur} defines a variation of Schur functions, now called the 7th variation, which are $\GL_s$-invariant as follows: 
\[
S_\lambda=\frac{\det(x_i^{q^{\lambda_j +s-j }})_{1\le i,j\le s} }{\det(x_i^{q^{s-j}})_{1\le i,j\le s}}.
\]
The Dickson invariant $Q_{s,i}$ is the Schur function for the partition $1^{s-i}$ and $\delta_{a;b}(1)$ is a special case of this variation. Note also that $S_{\lambda}$ can be constructed inductively using the $\delta$ operator, $$S_\lambda=\delta_{s;\lambda_1+s-1}(S_{\lambda_2,\ldots,\lambda_s}).$$

Let $\nabla^m_s$ denote the disjoint union of 
\[
\bigl\{S_\lambda Q_{s,s-1}^{a_1}\cdots Q_{s,0}^{a_s}\mid 0\le a_i<q^{\lambda_i}, i = 1,\ldots, s \bigr\},
\]
where, as in the case of $\Delta^m_s$, $\lambda$ runs over the set of all partitions that can fit inside an $s \times (m-s)$ rectangle. It is possible to check that $\Delta^m_s$ and $\nabla^m_s$ have the same Hilbert series.

The above discussion led us to propose the following refinement of the Lewis, Reiner and Stanton conjecture for the full general linear group:   
	
	\begin{conj}
For $m, n \geq 1$, the set	$\{ \delta^{n-s}_{s+1}(f)\mid f\in\nabla^m_s, 0\le s\le \min(m,n)  \}$ is a basis for the $\mathbb{F}_q$-vector space $\Qcal_m(n)^{\GL_n}$.
	\end{conj}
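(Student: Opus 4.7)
The plan follows the three standard steps for verifying a basis. Invariance of each $\delta_{s+1}^{n-s}(f)$ with $f \in \nabla^m_s$ is immediate from Proposition \ref{GLs to GLn}. A direct degree count --- using that $\delta_{s+1}^{n-s}$ shifts degree by $(n-s)(q^m-q^s)$ and that $\nabla^m_s$ has Hilbert series $\bmt{m\\ s}_{q,t}$ --- shows that the graded cardinality of the proposed set equals $C_{n,m}(t)$ provided the set is linearly independent. Consequently, proving the conjecture is equivalent to proving both linear independence and spanning, and would simultaneously yield the open Lewis-Reiner-Stanton conjecture for $\GL_n$.

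For linear independence, the natural tool is a leading-monomial analysis in graded lexicographic order, analogous to Corollary \ref{lm Yb}. For an element $\delta_{s+1}^{n-s}(S_\lambda Q_{s,s-1}^{a_1}\cdots Q_{s,0}^{a_s})$, the iterated operator contributes a predictable pattern of high-power monomials in the trailing variables $x_{s+1},\ldots,x_n$, while the partition $\lambda$ together with the bounded exponents $0 \le a_i < q^{\lambda_i}$ determines a distinguished monomial in $x_1,\ldots,x_s$. The bounded-exponent condition defining $\nabla^m_s$ is precisely what prevents collisions between different Dickson monomials within a fixed $\lambda$; comparing degree patterns among the trailing variables separates different values of $s$, and the $q$-adic structure of the leading monomial should separate different $\lambda$ within a fixed $s$.

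The main obstacle is spanning. I would proceed by induction on $n$, mimicking Section \ref{main-proof}. Given $F \in \Qcal_m(n)^{\GL_n}$, decompose it by lowest $x_1$-degree as $F = x_1^{(q-1)i}f(x_2,\ldots,x_n) + \cdots$. Since $\GL_n$-invariance includes the full symmetric group $S_n$, one can strengthen Lemmas \ref{top-type} and \ref{lower-type}: $f$ is $\GL_{n-1}$-invariant in the top-type case $i=[m]_q$, and a $q$-th power of a $\GL_{n-1}$-invariant in $\Qcal_{m-1}(n-1)$ in the lower-type case $i<[m]_q$. By induction, $f$ or its $q$-th root expands in the proposed basis at level $n-1$, and one then subtracts suitable elements $\delta_{s+1}^{n-s}(f')$ to reduce $F$ to a $\GL_n$-invariant of strictly larger lowest $x_1$-degree. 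The genuine difficulty --- and where the combinatorics of $\nabla^m_s$ enter essentially --- is that in the Borel proof the lower-type subtraction uses $\Phi$, whose outputs are not directly elements of the proposed $\GL_n$-basis. One must therefore identify algebraic identities rewriting each $D_1^i \Phi(\text{$\GL_{n-1}$-basis element})$ as a linear combination of $\delta_{s+1}^{n-s}(\nabla^m_s)$-elements modulo $(x_1^{q^m},\ldots,x_n^{q^m})$, and in particular handle the boundary $a_i = q^{\lambda_i}$ where a Dickson monomial must be re-expressed via a larger partition $\lambda'$. Making this ``carry'' bookkeeping precise, and ensuring the bounds $a_i < q^{\lambda_i}$ are respected at each inductive step, is the principal obstacle I anticipate.
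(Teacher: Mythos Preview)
The statement you are addressing is stated in the paper as a \emph{conjecture}, not a theorem: the paper offers no proof. The only evidence the authors cite is a separate preprint verifying the case $n\le 3$. So there is no ``paper's own proof'' to compare your proposal against.

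Your proposal is not a proof either, and you say as much: it is a strategy outline with an honestly identified obstruction. The invariance step (Proposition~\ref{GLs to GLn}) and the Hilbert-series bookkeeping are correct and already in the paper. The leading-monomial approach to linear independence is plausible but not established anywhere in the paper for the $\GL_n$ family; unlike the Borel case (Corollary~\ref{lm Yb}), there is no computed leading monomial for $\delta_{s+1}^{n-s}(S_\lambda Q_{s,s-1}^{a_1}\cdots Q_{s,0}^{a_s})$, and the interaction between the $S_\lambda$ factor and the Dickson powers under $\delta$ is not worked out. Your spanning sketch correctly identifies the crux: the Borel argument hinges on the operator $\Phi$, whose output $D_1^i\Phi(Y)$ is again a basis element by construction, whereas for $\GL_n$ no analogue of $\Phi$ lands back in $\nabla^m_s$. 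The ``carry'' mechanism you describe (re-expressing $a_i=q^{\lambda_i}$ in terms of a larger $\lambda'$) is the right intuition, but producing the required identities among $\delta_{s+1}^{n-s}(f)$ modulo $I_{n,m}$ is exactly the missing ingredient, and nothing in the paper supplies it. In short: your outline is a reasonable attack plan on an open problem, not a proof, and the paper agrees by labeling the statement a conjecture.
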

	It is clear that if this conjecture is true, then the $\GL_n$-invariant subspace of $\Qcal_m (n)$ has the right Hilbert series predicted by Lewis, Reiner and Stanton. 
	\begin{example} \label{GL2-inv}
	Let us consider the case $m \geq n = 2$. Our conjecture says that the following 3 families form a basis for $\Qcal_m(2)^{\GL_2}$. 
	\begin{enumerate}
		\item $\delta_1^2 (\nabla^m_0)$: Since $\nabla^m_0 = \{1\}$, this family contains only one polynomial, which is the top degree element $x_1^{q^m-1} x_2^{q^m-1}$. 
		\item $\delta_2^1 (\nabla^m_1)$: Unraveling the definition, we see that $\nabla^m_1$ consists of Dickson polynomials $Q_{1,0}^{k}$ with $k < [m]_q$. This family $\delta_2 (Q_{1,0}^{k})$ is exactly the $\GL_2$-invariants denoted $y_k$ in Goyal \cite[Proposition 4.1]{Goyal18}.
		\item $\delta_3^0 (\nabla^m_2)$: This is just $\nabla^m_2$ - part of the Dickson algebra that survives after taking modulo $(x_1^{q^m},x_2^{q^m})$. 
	\end{enumerate}
	\end{example}

In general, denote by $\P_{n,s}$ the iterated $\delta^{n-s}_{s+1}$, we may represent the conjectural picture as follows:
	\[ 
	\xymatrix{
		\nabla^m_0\ni f \ar[r]^-{\delta_{1}} &  \P_{1,0}(f) \ar[r]^-{\delta_{1}} & \P_{2,0}(f) \ar[r]^-{\delta_{1}} & \P_{3,0}(f) \ar[r]^-{\delta_{1}} &\cdots \\
		& 	\nabla^m_1\ni f \ar[r]^{\delta_{2}} & \P_{2,1}(f)\ar[r]^-{\delta_{2}} & \P_{3,1}(f) \ar[r]^-{\delta_{2}}& \cdots \\
		& &	\nabla^m_2\ni  f \ar[r]^-{\delta_{3}}& \P_{3,2}(f) \ar[r]^-{\delta_{3}} &\cdots \\
		& & & 	\nabla^m_3\ni f  \ar[r]^-{\delta_{4}} & \cdots 
	}
	\]
	The $n$th column consisting of $\P_{n,s}$, $0 \leq s \leq n$, would give a basis for $\Qcal_m(n)^{\GL_n}$. The horizontal lines are disconnected in the following sense. 
	\begin{prop} 
	For $s \geq 0$ and $f(x_1,\ldots,x_s)$ a rational function, we have $\delta_{s+2} \delta_{s+1}(f)=0$. 
	\end{prop}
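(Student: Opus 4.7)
The plan is to expand $\delta_{s+2}\delta_{s+1}(f)$ directly, using Proposition \ref{iterate-delta} (with $h=1$) twice, and then to show that the resulting sum vanishes by a termwise pairing. Writing out both expansions gives the double sum
\[
\delta_{s+2}\delta_{s+1}(f) = \sum_{j=1}^{s+2}\sum_{\substack{k=1\\ k\ne j}}^{s+2} \frac{x_j^{q^m}\, x_k^{q^m}\, f([s+2]\setminus\{j,k\})}{V([s+2]\setminus\{j\},\{j\})\cdot V([s+2]\setminus\{j,k\},\{k\})}.
\]

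I would then group the $(s+2)(s+1)$ summands into $\binom{s+2}{2}$ pairs indexed by the unordered pair $\{j,k\}$. Setting $A = [s+2]\setminus\{j,k\}$, the two terms in each pair share the common numerator $x_j^{q^m} x_k^{q^m} f(A)$, so it would suffice to prove the identity
\[
V(A\cup\{j\},\{k\})\cdot V(A,\{j\}) + V(A\cup\{k\},\{j\})\cdot V(A,\{k\}) = 0
\]
for every set $A$ and distinct $j, k \notin A$.

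To establish this identity, the plan is to use the relation $V(J,i) = \epsilon(J,i)\, L(J\cup\{i\})/L(J)$, where $\epsilon(J,i) = (-1)^{|\{a\in J : a>i\}|}$ is the sign of the permutation sorting $(j_1,\ldots,j_p,i)$. Each of the two products on the left-hand side then collapses to $\pm L(A\cup\{j,k\})/L(A)$; a direct sign comparison (assuming $j<k$ without loss of generality) shows that the two signs are opposite, and hence the two products differ only by a minus sign. This gives the desired cancellation in each pair, and therefore the whole double sum is zero.

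The main obstacle I anticipate is simply the sign bookkeeping inside the various $V$'s and $L$'s when arguments are not given in sorted order, since swapping variables both inside $L$ and when inserting the extra index can produce signs that must be tracked carefully. Once the sign comparison is verified, the pairwise cancellation is automatic and the proof is complete.
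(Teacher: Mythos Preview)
Your argument is correct. The double-sum expansion is exactly what one gets from two applications of the basic formula for $\delta_{r+1}$, and the pairwise cancellation works: with $A=[s+2]\setminus\{j,k\}$, each of the two products $V(A\cup\{j\},k)\,V(A,j)$ and $V(A\cup\{k\},j)\,V(A,k)$ telescopes to $\pm L([s+2])/L(A)$, and the sign check you outline (for $j<k$: $\epsilon(A\cup\{j\},k)\epsilon(A,j)=(-1)^{\alpha+\beta}$ versus $\epsilon(A\cup\{k\},j)\epsilon(A,k)=(-1)^{\alpha+\beta+1}$, where $\alpha=|\{a\in A:a>j\}|$, $\beta=|\{a\in A:a>k\}|$) does give opposite signs. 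So each unordered pair cancels and the whole sum vanishes.

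This is a genuinely different route from the paper's proof. The paper invokes the composition formula (Proposition~\ref{composite-delta}) with $h=k=1$, $r=s+1$, $g=1$ to write
\[
\delta_{s+2}\delta_{s+1}(f)=\sum_{T\in\Tcal(s,1)}A_{s+1;T}(1)\cdot\delta_{s+1}^{2}(\beta_T f),
\]
and then shows that each coefficient $A_{s+1;T_j}(1)=\sum_{i=1}^{s+2}x_i^{q^j}/V([s+2]\setminus\{i\},i)$ vanishes because its numerator is the Laplace expansion of an $(s+2)\times(s+2)$ determinant with a repeated row (since $j\le s$). Your approach is more elementary and self-contained: it needs only the single-step expansion of $\delta$ and a sign computation, avoiding Proposition~\ref{composite-delta} entirely. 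The paper's approach, by contrast, exhibits the vanishing as an instance of the general machinery and localises it in the factor $A_{s+1;T}(1)$, which is conceptually cleaner if one has already internalised that formula. Your version would make a nice alternative proof precisely because it shows the result does not depend on that heavier proposition.
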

	\begin{proof}
	 Applying Proposition \ref{composite-delta} for $h=k=1$, $r=s+1$, and $g=1$ - the constant function, we get 
		\[
		\delta_{s+2} \delta_{s+1} (f) = \sum_{T \in \Tcal(s,1)} A_{s+1;T} (1) \delta_{s+1}^2 (\beta_T f). 
		\]
		We will show that $A_{r;T} (1)= 0$. Indeed, note that the set $\Tcal (s,1)$ consists of exactly $s+1$ elements $T_j$, $0 \leq j \leq s$, corresponding to the composition of $1$ in which $t_j =1$ and all other $t_{\neq j} = 0$. 
		
		By definition, we have
		\[
		A_{r;T_j} (1) = \sum_{I \subset [r+1], |I|=1} \frac{\alpha_{T_j} (I)}{V(\bar I, I)} = \sum_{i=1}^{r+1} \frac{x_i^{q^j}}{V(x_1 \ldots, \hat{x_i}, \ldots, x_{r+1}, x_i)}. 
		\]
	The last sum can be rewritten as
	\[
\frac{	\sum_{i=1}^{r+1} (-1)^{r+1-i} x_i^{q^j} L(x_1 \ldots, \hat{x_i}, \ldots, x_{r+1})}{L(x_1, \ldots, x_{r+1})}.
	\]
	The numerator of this quotient is just the Laplace expansion along the last row of a $(r+1) \times (r+1)$ matrix whose $(u,v)$-entry is $x_v^{q^{u-1}}$ if $1 \leq u \leq r$ and equals $x_v^{q^j}$ if $u=r+1$. Since $j \leq s$, this matrix has 2 rows that are the same.   
\end{proof}

For the parabolic subgroup $\Par_{n_1,\ldots,n_k}$ of $\GL_n$ corresponding to the composition $n=n_1 + \ldots +n_k$, we propose the following variant: 
\begin{conj} 
A basis for $\Qcal_m(n)^{\Par_{n_1,\ldots,n_k}}$ is given inductively by
	$$\Bcal_m(n_1,\ldots,n_k)=\{\delta_{s+1;m}^{n_1-s}(f\cdot  \Phi^{s}g)\mid 0\le s \le \min(n_1,m), f\in \nabla^m_{s}, g \in \Bcal_{m-s}(n_2,\ldots,n_k)\}.$$
	Here the operator $\Phi$ is given by the ``rule": $\Phi(\delta_{a;b})=\delta_{a+1;b+1}$, $\Phi(Q_{r,i})=Q_{r+1,i+1}$ and $\Phi (S_\lambda)=S_{\lambda,0}$.
\end{conj}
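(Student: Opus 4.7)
My plan is to prove the conjecture by induction on the number $k$ of blocks in the composition, with the $\GL_n$-conjecture stated earlier in this section as the base case $k=1$. The inductive step assumes that $\Bcal_{m'}(n_2,\ldots,n_k)$ is a basis for $\Qcal_{m'}(n-n_1)^{\Par_{n_2,\ldots,n_k}}$ for all $m' \le m$, and deduces the result for $\Par_{n_1,\ldots,n_k}$. Observe that the main theorem of this paper handles the extreme case $k=n$, $(n_1,\ldots,n_k)=(1,\ldots,1)$: the inductive formula then reduces for $s=0$ to $\{\delta_{1;m}(g)\mid g\in \Bcal_m(1^{n-1})\}$, and for $s=1$, $f=D_1^a$ with $a<[m]_q$, to $\{D_1^a\Phi g\mid g\in \Bcal_{m-1}(1^{n-1})\}$, matching Definition \ref{B-basis}; this confirms that the inductive framework is consistent with the Borel case.

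The first step is to verify invariance of each candidate $\delta^{n_1-s}_{s+1;m}(f\cdot \Phi^s g)$ under $\Par_{n_1,\ldots,n_k}$ modulo $(x_1^{q^m},\ldots,x_n^{q^m})$. Invariance under the Levi factor $\GL_{n_1}\times \Par_{n_2,\ldots,n_k}$ should follow from Proposition \ref{GLs to GLn} applied to $f\in \nabla^m_s\subset\mathcal{D}_s$ (which is $\GL_s$-invariant) together with the inductive invariance of $g$; one must check that $\delta^{n_1-s}_{s+1;m}$ interacts properly with multiplication by $\Phi^s g$, which only involves variables of index $\ge s+1$. For the unipotent radical of $\Par_{n_1,\ldots,n_k}$, a generalization of Proposition \ref{k-invariant} should show that the appropriate $(*,m)$-invariance of $f\cdot \Phi^s g$ in the underlying variables is preserved by $\delta^{n_1-s}_{s+1;m}$.

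Next, one would verify the Hilbert series: grouping by $s$, each summand of the inductive formula contributes $t^{(n_1-s)(q^m-q^s)}\bmt{m\\s}_{q,t}$ from the $\delta^{n_1-s}_{s+1;m}(\nabla^m_s)$ part, while the Frobenius-like operator $\Phi^s$ multiplies all degrees by $q^s$, so that $g$ contributes $F_{(n_2,\ldots,n_k),m-s}(t^{q^s})$; summing over $s$ should recover $C_{(n_1,\ldots,n_k),m}(t)$ (a direct check in the Borel case gives exactly the recurrence proved in Section \ref{Hilbert}). Once the Hilbert series match, it suffices to establish linear independence, which should follow by analyzing the smallest monomial of each $Y=\delta^{n_1-s}_{s+1;m}(f\cdot \Phi^s g)$ in graded lexicographic order, generalizing Corollary \ref{lm Yb}: the outer $\delta^{n_1-s}_{s+1;m}$ produces distinguishable leading exponents among $x_{s+1},\ldots,x_{n_1}$ that separate different values of $s$, while the $\Phi^s$-deshifted polynomial $g$ can be read off from the terms of lowest degree in $x_1,\ldots,x_s$, after which the inductive hypothesis concludes.

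The main obstacle lies in two places. First, the base case $k=1$ is the $\GL_n$-conjecture, which is itself still open; any proof of the parabolic conjecture via this route must therefore simultaneously settle the full $\GL_n$ case. Second, even granting the base case, the genuine difficulty is to show spanning: given a $\Par_{n_1,\ldots,n_k}$-invariant $F$, view it as a $\GL_{n_1}$-invariant in $x_1,\ldots,x_{n_1}$ with coefficients in $\F_q[x_{n_1+1},\ldots,x_n]$ modulo $q^m$-th powers, and apply the $\GL_n$-conjecture to obtain a decomposition $F=\sum_s \delta^{n_1-s}_{s+1;m}(h_s)$ with $h_s$ a combination of elements of $\nabla^m_s$ tensored with polynomials in the remaining variables; one must then use invariance under the unipotent radical of $\Par_{n_1,\ldots,n_k}$ to force each $h_s$ to take the precise form $f\cdot \Phi^s g$ with $g\in \Qcal_{m-s}(n-n_1)^{\Par_{n_2,\ldots,n_k}}$. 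The Frobenius-like property of $\Phi$ in Lemma \ref{Frob} is the crucial technical tool, but establishing the exact interlacing between the two groups of variables encoded by $\Phi^s$ — especially verifying that $g$ lives in the correct $(m-s)$-truncation — is the core difficulty.
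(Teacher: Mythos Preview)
The statement you are addressing is a \emph{conjecture}, not a theorem: the paper does not prove it, and explicitly leaves it open (the only evidence offered is the remark that a companion preprint verifies the case $n\le 3$). There is therefore no ``paper's own proof'' to compare your proposal against.

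As a research strategy your outline is sensible and internally coherent --- the induction on $k$, the Hilbert-series check, the invariance argument via Propositions \ref{GLs to GLn} and \ref{k-invariant}, and the leading-monomial approach to linear independence all mirror the machinery the paper builds for the Borel case. You have also correctly isolated the two genuine obstructions: the base case $k=1$ is the open $\GL_n$-conjecture, and the spanning step for $k>1$ requires controlling how invariance under the unipotent radical forces the $\Phi^s$-structure and the descent from truncation level $m$ to $m-s$. Since you yourself flag that the base case is unresolved, what you have written is a plausible roadmap rather than a proof; it cannot be upgraded to a proof without new ideas beyond what the paper supplies.
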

\begin{example}
 Let $n=3$ and consider the composition $3=1+2$. For simplicity, assume that $m \geq 3$. Then the above conjecture predicts that a basis for $\Qcal_m (3)^{\Par_{1,2}}$ is the set $\Bcal_m (1,2)$ consisting of the following families: 
 \begin{enumerate}
     \item $\delta_{1;m} (g)$, where $g \in \Bcal_{m} (2)$, 
     \item $f \cdot \Phi g$, where $f \in \nabla^m_1$, $g \in \Bcal_{m-1} (2)$. 
 \end{enumerate}
The set $\Bcal_m(2)$ was already described in Example \ref{GL2-inv}. 
\end{example}

\begin{rmk} In \cite{Ha-Hai-Nghia-2-2023}, the above conjectures are verified for all parabolic subgroups in the case $n\le 3$. 
\end{rmk}

\bibliographystyle{amsplain}

\begin{thebibliography}{10}
	
	\bibitem{Boardman93}
	J.~Michael Boardman, \emph{Modular representations on the homology of powers of
		real projective space}, Algebraic topology ({O}axtepec, 1991), Contemp.
	Math., vol. 146, Amer. Math. Soc., Providence, RI, 1993, pp.~49--70.
	\MR{1224907}
	
	\bibitem{Deng23}
	Taiwang Deng, \emph{Torsions in cohomology of {${\rm SL}_2(\mathbb{Z})$} and
		congruence of modular forms}, J. Number Theory \textbf{246} (2023), 87--156.
	\MR{4531581}
	
	\bibitem{Dickson11}
	Leonard~Eugene Dickson, \emph{A fundamental system of invariants of the general
		modular linear group with a solution of the form problem}, Trans. Amer. Math.
	Soc. \textbf{12} (1911), no.~1, 75--98. \MR{1500882}
	
	\bibitem{Drescher-Shepler-2020}
	C.~Drescher and A.~V. Shepler, \emph{Invariants of polynomials mod {F}robenius
		powers}, J. Algebra \textbf{556} (2020), 908--935. \MR{4088444}
	
	\bibitem{Goyal18}
	Pallav Goyal, \emph{Invariant theory of finite general linear groups modulo
		{F}robenius powers}, Comm. Algebra \textbf{46} (2018), no.~10, 4511--4529.
	\MR{3847131}
	
	\bibitem{Ha-Hai-Nghia-2-2023}
	Le~Minh Ha, Nguyen Dang~Ho Hai, and Nguyen~Van Nghia, \emph{On invariants of the
		truncated polynomial ring}, preprint, 2023.
	
	\bibitem{Hewett96}
	T.~J. Hewett, \emph{Modular invariant theory of parabolic subgroups of {${\rm
				GL}_n({\bf F}_q)$} and the associated {S}teenrod modules}, Duke Math. J.
	\textbf{82} (1996), no.~1, 91--102. \MR{1387223}
	
	\bibitem{Kuhn1986}
	N.~J. Kuhn and S.~A. Mitchell, \emph{The multiplicity of the {S}teinberg
		representation of {${\rm GL}_n{\bf F}_q$} in the symmetric algebra},
	Proceedings of the American Mathematical Society \textbf{96} (1986), no.~1,
	1--6. \MR{813797}

\bibitem{Kuhn1987}
N.~J. Kuhn, \emph{The Morava $K$-Theories of Some Classifying Spaces}. Transactions of the American Mathematical Society \textbf{304} (1987) no.~1, 193–-205. \MR{906812}
 
	\bibitem{LewisReinerStanton2017}
	J.~Lewis, V.~Reiner, and D.~Stanton, \emph{Invariants of {${\rm GL}_n(\Bbb
			F_q)$} in polynomials modulo {F}robenius powers}, Proc. Roy. Soc. Edinburgh
	Sect. A \textbf{147} (2017), no.~4, 831--873. \MR{3681566}
	
	\bibitem{Macdonald-Schur}
	I.~G. Macdonald, \emph{Schur functions: theme and variations}, S\'{e}minaire
	{L}otharingien de {C}ombinatoire ({S}aint-{N}abor, 1992), Publ. Inst. Rech.
	Math. Av., vol. 498, Univ. Louis Pasteur, Strasbourg, 1992, pp.~5--39.
	\MR{1308728}

 \bibitem{Meyer-Smith-2005}
Dagmar~M. Meyer and Larry Smith, \emph{Poincar\'{e} duality algebras,
  {M}acaulay's dual systems, and {S}teenrod operations}, Cambridge Tracts in
  Mathematics, vol. 167, Cambridge University Press, Cambridge, 2005.
  \MR{2177162}
  
	\bibitem{Mui75}
	Huynh Mui, \emph{Modular invariant theory and cohomology algebras of
		symmetric groups}, J. Fac. Sci. Univ. Tokyo Sect. IA Math. \textbf{22}
	(1975), no.~3, 319--369. \MR{422451}
	
	\bibitem{ReinerStantonWhite2004}
	V.~Reiner, D.~Stanton, and D.~White, \emph{The cyclic sieving phenomenon}, J.
	Combin. Theory Ser. A \textbf{108} (2004), no.~1, 17--50. \MR{2087303}

 	\bibitem{Reiner-Stanton2010}
	Victor Reiner and Dennis Stanton.
	\newblock {$(q,t)$}-analogues and {${\rm GL}_n(\Bbb F_q)$}.
	\newblock {\em J. Algebraic Combin.}, 31(3):411--454, 2010.
	
	\bibitem{Walker-Wood-V1}
	Grant Walker and Reginald M.~W. Wood, \emph{Polynomials and the {${\rm mod}\,
			2$} {S}teenrod algebra. {V}ol. 1. {T}he {P}eterson hit problem}, London
	Mathematical Society Lecture Note Series, vol. 441, Cambridge University
	Press, Cambridge, 2018. \MR{3729477}
	
	\bibitem{Walker-Wood-V2}
	\bysame, \emph{Polynomials and the {${\rm mod}\, 2$} {S}teenrod algebra. {V}ol.
		2. {R}epresentations of {${\rm GL}(n,\Bbb F_2)$}}, London Mathematical
	Society Lecture Note Series, vol. 442, Cambridge University Press, Cambridge,
	2018. \MR{3729478}
	
\end{thebibliography}

\providecommand{\bysame}{\leavevmode\hbox to3em{\hrulefill}\thinspace}
\providecommand{\MR}{\relax\ifhmode\unskip\space\fi MR }
\providecommand{\MRhref}[2]{%
	\href{http://www.ams.org/mathscinet-getitem?mr=#1}{#2}
}
\providecommand{\href}[2]{#2}

\end{document}